\newtheorem{theorem}{Theorem}
\newtheorem{corollary}{Corollary}
\theoremstyle{definition}
\newtheorem{remark}{Remark}
\newtheorem{example}{Example}
\newtheorem{definition}{Definition}
\newtheorem*{ack}{Acknowledgement}
\newtheorem{urn}{Urn}
\def\P{{\mathbb {P}}}
\def\E{{\mathbb {E}}}
\newcommand{\Seq}{\textsc{SEQ}}
\newcommand{\N}{\ensuremath{\mathbb{N}}}
\newcommand{\refT}[1]{Theorem~\ref{#1}}
\newcommand{\refD}[1]{Definition~\ref{#1}}
\newcommand{\refR}[1]{Remark~\ref{#1}}
\newcommand{\refS}[1]{Section~\ref{#1}}
\newcommand{\refE}[1]{Example~\ref{#1}}
\newcommand{\refUA}{Urn~\ref{UA}}
\newcommand{\refUB}{Urn~\ref{UB}}
\newcommand{\refUCx}[1]{Urn~\ref{UC}${}_{#1}$}
\newcommand{\refUCm}{\refUCx m}
\newcommand\ntoo{\ensuremath{{n\to\infty}}}
\newcommand\cQ{{\mathcal{Q}}}
\newcommand\bcQ{\overline{\mathcal{Q}}}
\newcommand\cA{\mathcal A}
\newcommand\cB{\mathcal B}
\newcommand\cP{\mathcal P}
\newcommand\cS{\mathcal S}
\newcommand\cT{\mathcal T}
\newcommand\tV{\widetilde V}
\newcommand\tK{\widetilde K}
\newcommand\tcK{\widetilde {\mathcal{K}}}
\newcommand\tKn[1]{\widetilde K_{n,#1}}
\newcommand\tcKn[1]{\widetilde {\mathcal{K}}_{n,#1}}
\newcommand\Kn[1]{K_{n,#1}}
\newcommand\bbN{\mathbb N}
\newcommand\ga{\alpha}
\newcommand\gb{\beta}
\newcommand\gd{\delta}
\newcommand\gf{\varphi}
\newcommand\gl{\lambda}
\newcommand\Var{\operatorname{Var}}
\newcommand\Cov{\operatorname{Cov}}
\newcommand\PD{\operatorname{PD}}
\newcommand\Beta{\operatorname{Beta}}
\newcommand\eqd{\overset{\mathrm{d}}{=}}
\newcommand\set[1]{\ensuremath{\{#1\}}}
\newcommand\xpar[1]{(#1)}
\newcommand\bigpar[1]{\bigl(#1\bigr)}
\newcommand\Bigpar[1]{\Bigl(#1\Bigr)}
\newcommand\lrpar[1]{\left(#1\right)}
\newcommand\Sp{Stirling permutation}
\newcommand\gSp{generalized \Sp}
\newcommand\kSp{$k$-\Sp}
\newcommand\kbSp{$k$-bundled \Sp}
\newcommand\xL{\tilde L}
\newcommand\gpu{generalized P\'olya urn}
\newcommand\zzeta{\tilde\zeta}
\newcommand\W{W}
\newcommand\B{B}
\newcommand{\alaw}{\ensuremath{\xrightarrow{(a.s.)}}}
\newcommand{\dto}{\ensuremath{\xrightarrow{(d)}}}
\title[Stirling permutations, increasing trees and urn models]
{Generalized Stirling permutations, families of increasing trees and urn models}
\author[S.~Janson]{Svante Janson}
\address{Svante Janson\\
Department of Mathematics, Uppsala University, PO Box 480, SE-751 06
Uppsala, Sweden}%
\email{svante.janson@math.uu.se}
\urladdr{http://www.math.uu.se/$\sim$svante/}
\author[M.~Kuba]{Markus Kuba}
\address{Markus Kuba\\
Institut f{\"u}r Diskrete Mathematik und Geometrie\\
Technische Universit\"at Wien\\
Wiedner Hauptstr. 8-10/104\\
1040 Wien, Austria} %
\email{kuba@dmg.tuwien.ac.at}
\author[A.~Panholzer]{Alois Panholzer}
\address{Alois Panholzer\\
Institut f{\"u}r Diskrete Mathematik und Geometrie\\
Technische Universit\"at Wien\\
Wiedner Hauptstr. 8-10/104\\
1040 Wien, Austria} \email{Alois.Panholzer@tuwien.ac.at}
\thanks{The second and third author were supported by the Austrian Science Foundation FWF, grant S9608.}
\date{May 26, 2008}  
\begin{document}

\begin{abstract}
Bona \cite{Bona2007} studied the distribution of ascents, plateaux and descents in the class
of Stirling permutations, introduced by Gessel and Stanley \cite{GessStan1978}.
Recently, Janson \cite{Jan2008} showed the connection between Stirling permutations and plane recursive trees and proved
a joint normal law for the parameters considered by Bona. Here we will consider generalized Stirling permutations 
extending the earlier results of \cite{Bona2007}, \cite{Jan2008},
and relate them with certain families of generalized plane recursive trees, and
also $(k+1)$-ary increasing trees.  We also give two different bijections 
between certain families of increasing trees, which both give as a special case
a bijection between ternary increasing trees and plane recursive trees.
In order to describe the (asymptotic) behaviour of the parameters of interests, 
we study three (generalized) P\'olya urn models using various methods.\end{abstract}
\keywords{Increasing trees, plane recursive trees, Stirling
  permutations, ascents, descents, urn models, limiting distribution}%
\subjclass[2000]{05C05} %

\maketitle


\section{Introduction\label{STIRPsec1}}

Stirling permutations were defined by Gessel and Stanley \cite{GessStan1978}.
A Stirling permutation is a permutation of the multiset
$\{1, 1, 2, 2, \dots , n, n\}$ such that for each $i$, $1\le i \le n$, the elements occuring between
the two occurences of $i$ are larger than $i$. The name of these combinatorial objects is due to relations
with the Stirling numbers, see \cite{GessStan1978} for details.

Let $\sigma = a_1a_2 \dotsm a_{2n}$ be a Stirling permutation. Let the
index $i$ (or the gap $(i,i+1)$)
be
called an ascent of $\sigma$ if $i = 0$ or $a_i < a_{i+1}$, let $i$ be called a descent of $\sigma$
if $i = 2n$ or $a_i > a_{i+1}$, and let $i$ be called a plateau of
$\sigma$ if $a_i = a_{i+1}$.
(It is convenient to define $a_0=a_{2n+1}=0$; this takes care of the
special cases $i=0$ and $i=2n$.) Note that $i$ runs from $0$ to $2n$,
so the total number of ascents, descents and plateaux is $2n+1$.
Let $\mathcal{Q}_n$ denote the set of Stirling permutation of  
$\{1,1, 2, 2,\dots , n, n\}$; we say that these have order $n$.
Bona \cite{Bona2007} showed that the parameters numbers of ascents,
descents and 
plateaux are equidistributed on  
$\mathcal{Q}_n$. Moreover, he showed a central limit theorem for the
three parameters. 

A rooted tree of order $n$ with the vertices labelled $1,2,\dots,n$,
is an increasing tree if the node 
labelled 1 is distinguished as the root, and for each $2 \le k \le n$,
the labels of
the nodes in the unique path from the root to the node labelled $k$ form an
increasing sequence.
We will consider several families of increasing trees. The first one
is the family of \emph{increasing plane trees}, usually called 
\emph{plane recursive trees}, where the children of a node are ordered
(from left to right, say).
Note that plane recursive trees also appear in literature under the
names plane-oriented recursive trees, heap-ordered trees, and
sometimes also as scale-free trees. 
Further families will be defined later.


Let $\mathcal{T}_n$ denote the set of plane recursive trees with $n$
vertices. It was shown by Janson \cite{Jan2008} that plane recursive
trees on $n+1$ vertices are in bijection with Stirling permutations
on $\{1, 1, 2, 2, \dots , n, n\}$, $\mathcal{T}_{n+1} \cong
\mathcal{Q}_n$. Moreover, using this bijective correspondence, he
showed that the number of descents in the \Sp{} corresponds to the
number of leaves in the associated plane recursive tree.
Furthermore, using an urn model and general theorems, see
\cite{Jan2004} and also \cite{Jan2005}, Janson showed the joint
normality of the parameters ascent, descent and plateau.
The purpose of this work is to extend this connection between
Stirling permutations and plane recursive trees in
Janson \cite{Jan2008}, to generalized Stirling permutations. 
In particular, we give a bijection between
Stirling permutations on $\{1^k, 2^{k}, \dots, n^{k}\}$, where here and throughout this work $1^l:=\underbrace{1,\dots ,1}_{l}$, with $l\ge 1$,
which we call $k$-Stirling permutations, and $(k+1)$-ary increasing trees; moreover we can also relate $k$-Stirling permutations with
a certain family of plane recursive trees, namely $k$-plane recursive trees.
Concerning Stirling permutations of the multiset $\{1^k, 2^{k+2}, \dots,
n^{k+2}\}$, which we call  $k$-bundled Stirling
permutations, 
we obtain a bijection with certain generalized plane recursive trees,
namely $k$-bundled increasing trees. We also give two different bijections 
between certain families of increasing trees, which both give as a special case
a bijection between ternary increasing trees and plane oriented
increasing trees. 
Moreover, we will use several different
methods, combinatorial and probabilistic, to derive several
results in this direction. More precisely, in order 
to describe the (asymptotic) behaviour of the parameters of interests, 
we study three (generalized) P\'olya urn models.

The parameter $k$ is fixed throughout the paper, and often omitted
from the notation.
All unspecified limits are as \ntoo.
In the results with a.s.\ convergence, we assume that the random
\kSp{} 
grows in the natural way by random addition of new labels; in the
other results, this does not matter.

\section{Preliminaries}\label{Sprel}

\subsection{Generalized Stirling permutations}
A straightforward generalization of Stirling permutations on the
multiset $\{1, 1, 2, 2, \dots , n, n\}$ is to consider
permutations of
a more general multiset 
$\{1^{k_1}, 2^{k_2}, \dots, n^{k_n}\}$, with $k_i\in\N$ for $1\le i\le n$.
We call a permutation of the multiset $\{1^{k_1}, 2^{k_2},\dots, n^{k_n}\}$
a \emph{generalized Stirling permutation}, if for each
$i$, $1\le i \le n$, the elements occurring between two occurrences of
$i$ are at least $i$. (In other words, the elements occurring between two
consecutive occurrences of $i$ are larger than $i$.)
Such permutations have already previously been considered by Brenti
\cite{Brenti1989}, \cite{Brenti1998}. 
The number of generalized \Sp{s} of $\{1^{k_1}, 2^{k_2},\dots, n^{k_n}\}$
is
\begin{equation}\label{gsp}
  \prod_{i=1}^{n-1}(\ell_i+1)\quad\text{with } \ell_i=\sum_{j=1}^ik_j;
\end{equation}
this is easy to see by induction, since the $k_n$ copies of $n$ have
to form a substring, and this substring can be inserted in $\ell_{n-1}+1$
positions (viz., anywhere, including first or last)
in any \gSp{} of $\{1^{k_1}, 2^{k_2},\dots, (n-1)^{k_{n-1}}\}$.

We will consider two cases and give them special names:
a \emph{$k$-Stirling permutation} of order $n$ is a generalized
Stirling permutation of the multiset $\{1^k, 2^k,\dots, n^k\}$,
and
a \emph{$k$-bundled Stirling permutation} is
a generalized Stirling permutation of the multiset
$\{1^k, 2^{k+2}, \dots, n^{k+2}\}$. Here $k\ge1$, but note that
$1$-Stirling permutations are just ordinary permutations so we will
usually consider $k$-Stirling permutations for $k\ge2$ only; the case
$k=2$ yields the ordinary Stirling permutations defined by Gessel and Stanley
\cite{GessStan1978}. 

What we call \kSp{s}
was suggested by Gessel and Stanley \cite{GessStan1978} 
and has been studied by Park \cite{Park1994b,Park1994a,Park1994c} under
the name $k$-multipermutations.

In the following, let
$\mathcal{Q}_n=\mathcal{Q}_n(k)$ denote the set of $k$-Stirling
permutations of order $n$ and
let $Q_n=Q_n(k)$ denote the number $|\cQ_n(k)|$ of them.
By \eqref{gsp},
\begin{equation}\label{qn}
  Q_n(k)
=|\cQ_n(k)|
= \prod_{i=1}^{n-1}(ki+1)
=k^n\frac{\Gamma(n+1/k)}{\Gamma(1/k)}.
\end{equation}
For $k=2$ this number is just $Q_n(2)=(2n-1)!!$. 
In the case 
$k=3$, we have for example one permutation of order $1$: $111$; four
permutations of order $2$: $111222$, $112221$, $122211$, $222111$; etc. 

Similarly, let $\overline{\mathcal{Q}}_n=\overline{\mathcal{Q}}_n(k)$
denote the set of $k$-bundled Stirling permutations of order $n$ and
let $\overline{Q}_n=\overline{Q}_n(k)$ 
denote the number of them.
We have, by \eqref{gsp},
\begin{equation}
  \label{bqn}
\overline{Q}_n
=|\bcQ_n(k)|
=\prod_{i=1}^{n-1}(i(k+2)-1)
=(k+2)^{n-1}\frac{\Gamma(n-1/(k+2))}{\Gamma(1-1/(k+2))}.
\end{equation}

We define \emph{ascents}, \emph{descents} and \emph{plateaux} of a \gSp{} 
$\sigma = a_1a_2\dotsm a_{\ell}$ 
of $\{1^{k_1}, 2^{k_2},\dots, n^{k_n}\}$
(where the length $\ell=\sum_1^n k_i$)
as before: we let $a_0=a_{\ell+1}=0$ and say that an 
index $i$, with $0\le i \le \ell$, is an ascent, descent or plateau
if $a_i<a_{i+1}$, $a_i>a_{i+1}$ or $a_i=a_{i+1}$, respectively.
Note that the total number of them is $\ell+1$.

We introduce a natural refinement of ascents, descents and plateaux,
namely \emph{$j$-ascents}, \emph{$j$-descents}, and \emph{$j$-plateaux}.
An index $i$, with $1\le i\le \ell$ 
is called a $j$-ascent, 
if $i$ is
an ascent and there are exactly $j-1$ indices $i'<i$ such that
$a_{i'}=a_{i}$; \i.e., $a_i$ is the $j$th occurrence of the symbol $a_i$,
and similarly for plateaux. 
For a descent $i$, $a_i$ is always the last occurence of that symbol
(just as for an ascent, $a_{i+1}$ is the first of its kind), and we define
a $j$-descent as a descent $i<\ell$ such that $a_{i+1}$ is the
the $j$th occurrence of that symbol.
(Note that we choose not to allow $i=0$ or $i=\ell$ in these definitions.)

Thus, for a \gSp{} 
of $\{1^{k_1}, 2^{k_2},\dots, n^{k_n}\}$, the possible values of $j$
ranges from 1 to $\max_i k_i$ for $j$-ascents and $j$-descents,
and from 1 to $\max_i k_i-1$ for $j$-plateaux.
In particular, for \kSp{s}, $1\le j\le k$ for 
$j$-ascents and $j$-descents, and 
$1\le j\le k-1$ for $j$-plateaux.
Note also that if we reflect a \kSp, we get a new \kSp, and
$j$-ascents in one of them correspond to $(k+1-j)$-descents in the other.

\begin{example}
Consider the 3-Stirling permutation $\sigma=112233321$: 
Index 1 is a 1-plateau, index 2 is a $2$-ascent, 
index 3 is a 1-plateau, index 4 is a $2$-ascent, index 5 is a 1-plateau,
index $6$ is a 2-plateau, index 7 is a $3$-descent, and index 8 is a
3-descent.
(Indices 0 and 9 are not classified in this way.)
\end{example}

We are interested in
the (joint) distributions of the random variables $X_{n,j}$, $Y_{n,j}$
and $Z_{n,j}$, defined as the numbers of 
$j$-ascents, $j$-descents and $j$-plateaux, respectively, in a random
$k$-Stirling permutation (chosen uniformly in $\mathcal{Q}_n(k)$). 
Note that these trivially are 0 unless 
$1\le j\le k$ for $X_{n,j}$ and $Y_{n,j}$,
and $1\le j\le k-1$ for $Z_{n,j}$, and that
\begin{equation*}
\sum_{j=1}^{k}(X_{n,j} + Y_{n,j}) + \sum_{j=1}^{k-1}Z_{n,j} = kn - 1
\end{equation*}

We further let $X_n$, $Y_n$ and $Z_n$ denote the total numbers of
ascents, descents and plateaux, respectively. Note that, recalling
the special definitions at the endpoints,
\begin{align}
  X_n&=\sum_{j=1}^kX_{n,j}+1, \label{xyzx}
\\
  Y_n&=\sum_{j=1}^kY_{n,j}+1, \label{xyzy}
\\
  Z_n&=\sum_{j=1}^kZ_{n,j}. \label{xyzz}
\end{align}

It is easy to see that a $j$-ascent with $j<k$ corresponds to a later
$(j+1)$-descent, and conversely, so 
\begin{equation}
X_{n,j}=Y_{n,j+1},
\qquad1\le j \le k-1, 
\end{equation}
see also \refT{STIRPthe1}.
However, there is no corresponding relation for $k$-ascents, of for
1-descents, and the total numbers of ascents and descents are
typically different, even in the case $k=2$.
Further, since only the last copy of a label can be a descent,
\begin{equation}\label{xz}
X_{n,j}+Z_{n,j}=n,
\qquad1\le j \le k-1,
\end{equation}
and, similarly or by \eqref{xz},
\begin{equation}
Y_{n,j}+Z_{n,j-1}=n,
\qquad2\le j \le k. 
\end{equation}

Moreover, we are also interested in the distribution of the number 
of \emph{blocks} in a random $k$-Stirling permutation of order $n$. 
A block in a \gSp{} $\sigma=a_1\dotsm a_\ell$ is a substring
$a_p\dotsm a_q$ with $a_p=a_q$
that is maximal, i.e.~not contained in any larger
such substring. 
There is obviously at most one block for every $j=1,\dots,n$,
extending from the first occurrence of $j$ to the last; we say that
$j$ forms a block when this substring really is a block, i.e.~when it
is not contained in a string $i\dotsm i$ for some $i<j$.
In particular, in a \kSp, 
$j$ forms a block if for any $i$ with $1\le i \le j-1$, there do not exist
indices $m_0,\dots m_{k+1}$ , with $1\le m_0<\dots<m_{k+1}\le kn$, such that 
$\sigma_{m_0}=\sigma_{m_{k+1}}=i$ and
$\sigma_{m_1}=\dots=\sigma_{m_k}=j$.
It is easily seen by induction that any \gSp{} has a unique
decomposition as a sequence of its blocks.
Note that if we add a string $(n+1)^{k_{n+1}}$ to a \gSp, this string
will either be swallowed by one of the existing blocks, or
form a block on its own; the latter happens when it is added first,
last, or in a gap
between two blocks.

\begin{example}
The 3-Stirling permutation $\sigma=112233321445554666$, has 
block decomposition $[112233321][445554][666]$.
\end{example}

\smallskip

One may also consider the similar problems for $k$-bundled \Sp{s};
similarly defining random variables
$\overline{X}_{n,j}$, $\overline{Y}_{n,j}$ and $\overline{Z}_{n,j}$.
However, for most results we restrict ourselves to
$k$-Stirling permutations. 

\subsection{Generalized plane recursive trees and $d$-ary increasing trees}
In order to relate the $k$-Stirling permutations to families of
increasing trees 
we use a general setting based on earlier considerations of
Bergeron et al.~\cite{BerFlaSal1992} and Panholzer and Prodinger
\cite{PanPro2005+}. 

For a given degree-weight sequence $(\varphi_{k})_{k \ge 0}$, the corresponding degree-weight generating function $\varphi(t)$ is defined by $\varphi(t) := \sum_{k \ge 0} \varphi_{k} t^{k}$.
The simple family of increasing trees $\mathcal{T}$ associated with a degree-weight generating function $\varphi(t)$, can be described
by the formal recursive equation \begin{equation}
   \label{eqnz0}
   \mathcal{T} = \bigcirc\hspace*{-0.75em}\text{\small{$1$}}\hspace*{0.4em}
   \times \Big(\varphi_{0} \cdot \{\epsilon\} \; \dot{\cup} \;
   \varphi_{1} \cdot \mathcal{T} \; \dot{\cup} \; \varphi_{2} \cdot
   \mathcal{T} \ast \mathcal{T} \; \dot{\cup} \; \varphi_{3} \cdot
   \mathcal{T} \ast \mathcal{T} \ast \mathcal{T} \; \dot{\cup} \; \cdots \Big)
   = \bigcirc\hspace*{-0.75em}\text{\small{$1$}}\hspace*{0.3em} \times \varphi(\mathcal{T}),
\end{equation}
where $\bigcirc\hspace*{-0.75em}\text{\small{$1$}}\hspace*{0.4em}$ denotes the node
labelled by $1$, $\times$ the cartesian product, $\dot{\cup}$ the disjoint union, $\ast$ the partition product for labelled
objects, and $\varphi(\mathcal{T})$ the substituted structure (see
e.~g., the books 
\cite{VitFla1990}, \cite{FlaSed2008}).
This means that the elements of $\cT$ are increasing plane trees, and
that a tree with (out-)degrees $d_1,\dots,d_n$ is given weight
$\prod_1^n\gf_{d_i}$. 
By a random tree of order $n$ from the family $\cT$, we mean a tree of
order $n$ chosen randomly with probabilities proportional to the weights.

Let $T_n$ be the total weight of all such trees of order $n$.
It follows from \eqref{eqnz0} that the exponential generating function
$T(z) := \sum_{n \ge 1} T_{n} \frac{z^{n}}{n!}$ of the total weights
satisfies the autonomous first order differential equation
\begin{equation}
   \label{eqnz1}
   T'(z) = \varphi\big(T(z)\big), \quad T(0)=0.
\end{equation}

The families that we will consider have degree-weights of one of the
two following forms, studied by Panholzer and Prodinger \cite{PanPro2005+}:
\begin{equation}\label{pp}
\varphi(t) = \begin{cases}\frac{\varphi_{0}}
      {(1 + \frac{c_{2} t}{\varphi_{0}})^{-\frac{c_{1}}{c_{2}}-1}},
      \enspace \text{for} \enspace \varphi_{0} > 0, \; 0 < -c_{2} <
       c_{1},
\quad\text{\em generalized plane recursive trees,}\\
       \varphi_{0}
      \Big(1 + \frac{c_{2} t}{\varphi_{0}}\Big)^{d},
      \enspace \text{for} \enspace \varphi_{0},c_2 > 0,    \; d :=
	  \frac{c_{1}}{c_{2}}+1 \in \N\setminus\{1\},
\quad \text{\em $d$-ary increasing trees}.
      \end{cases}
\end{equation}
Consequently, by solving \eqref{eqnz1}, we obtain exponential generating function $T(z)$
\begin{equation}\label{t}
T(z)=
     \begin{cases}
   \frac{\varphi_{0}}{c_{2}} \Big(\frac{1}{(1-c_{1} z)^{\frac{c_{2}}{c_{1}}}} -1 \Big),\enspace\text{generalized plane recursive trees},\\
    \frac{\varphi_{0}}{c_{2}} \Big(\frac{1}{(1-(d-1) c_{2} z)^{\frac{1}{d-1}}} - 1 \Big),\enspace d\text{-ary increasing trees},
    \end{cases}
\end{equation}
 and the total weights $T_n$,
\begin{equation}\label{tn}
T_{n} = \varphi_{0} c_{1}^{n-1} (n-1)! \binom{n-1+\frac{c_{2}}{c_{1}}}{n-1}.
\end{equation}
Note that changing $\gf_k$ to $ab^k\gf_k$ for some positive constants
$a$ and $b$ will affect the weights of all trees of a given order $n$
by the same factor $a^nb^{n-1}$, which does not affect the
distribution of a random tree from the family. Hence, when considering
random trees from these two classes, $\gf_0$ is irrelevant and $c_1$
and $c_2$ are relevant only through the ratio $c_1/c_2$. (We may thus,
if we like, normalize $\gf_0=1$ and either $c_1$ or $|c_2|$, but not both.)

As shown by Panholzer and Prodinger \cite{PanPro2005+}, random trees
in the two classes of families given in \eqref{pp}
can be grown as an evolution process in the
following way. 
The process, evolving in discrete time, starts with the root labelled by $1$.
At step $i+1$ the node with label $i+1$ is attached to any previous
node $v$ (with out-degree $d(v)$) of the already grown tree of order
$i$ with probabilities $p(v)$ 
given by
\begin{equation*}
   p(v)=
   \begin{cases}
   \frac{d(v)+\alpha}{(\alpha+1)i-1} \quad \text{with} \enspace \alpha := -1-\frac{c_{1}}{c_{2}} > 0,\enspace\text{generalized plane recursive trees},\\
   \frac{d-d(v)}{(d-1)i+1},\enspace d\text{-ary increasing trees}.
   \end{cases}
\end{equation*}
Moreover, Panholzer and Prodinger \cite{PanPro2005+} showed that there
are only three classes of simple families that can be grown in this
way (for suitable $p(v)$): the two classes given in \eqref{pp} and the
recursive trees given by $\gf(t)=\gf_0e^{c_1t/\gf_0}$ with $\gf_0,c_1>0$
(which can be regarded as a limiting case of any of the two classes
above, letting $c_2\to0$.)

\begin{example}\label{Eplane}
\emph{Plane recursive trees} are 
   plane increasing trees such that all node degrees are allowed,
with all trees having weight 1. Thus $\gf_k=1$ and
   the degree-weight generating function is $\varphi(t) =\frac{1}{1-t}$,
which is of the form in \eqref{pp}
   with $\varphi_0=1$, $c_1=2$ and $c_2=-1$.
   We have
   \begin{equation*}
      T(z) = 1-\sqrt{1-2z}, \quad \text{and} \quad
      T_{n} = 1 \cdot 3 \cdot 5 \cdots (2n-3)
      = (2n-3)!!, \enspace \text{for} \; n \ge 1.
   \end{equation*}
   Furthermore, $\alpha=-1-\frac{c_1}{c_2}=1$, and consequently, the
   probability attaching to node $v$ at step $i+1$ is given by
   $p(v)=\frac{d(v)+1}{2i-1}$. 
\end{example}

\begin{example}\label{Ed}
For an integer $d\ge2$, \emph{$d$-ary increasing trees} are
increasing trees where each node has $d$ (labelled) positions for children.
Thus, only outdegrees $0,\dots,d$ are allowed; moreover, for a node
with $k$ children in given order, there is thus $\binom dk$ ways to
attach them. Hence, this family is given by vertex weights
$\gf_k=\binom dk$ and thus 
the degree-weight generating function $\varphi(t) ={1+t}^d$,
which is of the form in \eqref{pp}
   with $\varphi_0=1$, $c_1=d-1$ and $c_2=1$.
By \eqref{t},
   \begin{equation*}
      T(z) = \bigpar{1-(d-1)z}^{-1/(d-1)}-1.
   \end{equation*}
\end{example}

\section{Increasing trees associated to generalized Stirling permutations}
\label{Sinc}

\subsection{$(k+1)$-ary increasing trees, $k$-plane recursive trees
   and $k$-Stirling permutations} 
\label{SSinck}

Recall from \refE{Ed} that, for $k\ge1$,
the degree-weight generating function of $(k+1)$-ary increasing trees
is given by 
$\varphi(t)=(1+t)^{k+1}$, i.e.~$\varphi_0=1$, $c_1=k$ and $c_2=1$.
Consequently, the generating function $T(z)$ and the numbers $T_n$ of
$(k+1)$-ary trees of order $n$ are given by
\begin{equation*}
T(z)=\frac{1}{(1-kz)^{\frac1k}}-1,
\qquad T_{n}=\prod_{l=1}^{n}(k(l-1)+1), 
\quad
n\ge 1,
\end{equation*}
and the the probability of attaching to node $v$ at step $i+1$ is
given by $p(v)=\frac{k+1-d(v)}{ki +1 }$. 

Note that $T_n=Q_n$, the number of \kSp, which makes the following
theorem reasonable.

\begin{theorem}[Gessel]
\label{STIRPprop1T}
Let $k\ge1$.
The family $\mathcal{A}_n=\mathcal{A}_n(k+1)$ of $(k+1)$-ary
increasing trees of order $n$ is in a natural bijection
with $k$-Stirling permutations, $\mathcal{A}_n(k+1)\cong
\mathcal{Q}_n(k)$. 
\end{theorem}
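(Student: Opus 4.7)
The plan is to build a natural recursive bijection $\Phi\colon\mathcal{A}_n(k+1)\to\mathcal{Q}_n(k)$ by reading off the tree in depth-first order. The key idea is that a node of a $(k+1)$-ary tree has $k+1$ ordered child positions and hence exactly $k$ ``internal gaps'' between consecutive subtree visits---precisely the right number of spots in which to insert the $k$ copies of that node's label. Formally, for a $(k+1)$-ary increasing tree $T$ of order $n$ with root labelled $r$ (the minimum label of $T$, since $T$ is increasing) and possibly empty subtrees $T_1,\dots,T_{k+1}$ in its $k+1$ child positions read from left to right, define
\[
\Phi(T):=\Phi(T_1)\,r\,\Phi(T_2)\,r\,\Phi(T_3)\cdots r\,\Phi(T_{k+1}),
\]
with the empty tree mapping to the empty word.

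An induction on $n$ then verifies that $\Phi(T)\in\mathcal{Q}_n(k)$: each label of $T$ appears exactly $k$ times in $\Phi(T)$, and the symbols strictly between two consecutive copies of a label $i$ are precisely the labels appearing in a single subtree rooted at a child of $i$, all of which exceed $i$ because $T$ is increasing. For the inverse, given $\sigma\in\mathcal{Q}_n(k)$ let $m$ be its smallest symbol; the Stirling condition prevents anything smaller than $m$ from lying between the $k$ copies of $m$, so these $k$ copies split $\sigma$ uniquely as $\sigma=\sigma_1\,m\,\sigma_2\,m\cdots m\,\sigma_{k+1}$, with the $\sigma_j$ sitting on disjoint subsets of $\{m+1,\dots\}$. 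Let $\Psi(\sigma)$ be the increasing tree with root $m$ whose $k+1$ child positions carry, from left to right, the recursively obtained trees $\Psi(\sigma_1),\dots,\Psi(\sigma_{k+1})$.

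A further induction on $n$ gives $\Psi\circ\Phi=\mathrm{id}$ and $\Phi\circ\Psi=\mathrm{id}$. The only genuine point to check---and thus the main obstacle, modest as it is---is that each $\sigma_j$ inherits the Stirling property so that the recursive call of $\Psi$ makes sense. This is immediate: for any $i>m$, the $k$ copies of $i$ cannot straddle an occurrence of $m$ (else $m$ would sit between two copies of $i$, contradicting the Stirling condition for $i$), so they lie in a common $\sigma_j$, and the Stirling condition for $i$ restricts verbatim from $\sigma$ to $\sigma_j$. As a sanity check, the bijection is compatible with the natural growth rules of both classes: each of the $(k+1)n-(n-1)=kn+1$ empty child positions in a tree of order $n$ corresponds under $\Phi$ to one of the $kn+1$ gaps in which the block $(n+1)^k$ may be inserted to produce a \kSp{} of order $n+1$, in agreement with \eqref{gsp} and the equality $|\mathcal{A}_n(k+1)|=|\mathcal{Q}_n(k)|$ already recorded.
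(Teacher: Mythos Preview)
Your proof is correct and follows essentially the same approach as the paper: the recursive map $\Phi(T)=\Phi(T_1)\,r\,\Phi(T_2)\,r\cdots r\,\Phi(T_{k+1})$ is exactly the depth-first (contour) encoding the paper describes, and your inverse $\Psi$ via the decomposition $\sigma=\sigma_1\,m\,\sigma_2\,m\cdots m\,\sigma_{k+1}$ at the minimum label is the same recursive inverse the paper sketches. If anything, you are slightly more careful than the paper in explicitly verifying that each $\sigma_j$ inherits the Stirling property so that the recursion is well defined.
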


\begin{remark}
The authors independently derived the result above, and later
discovered the work of Park~\cite{Park1994b},
in which Gessel's result was mentioned but the proof only sketched. The result of Gessel never appeared in print except this
mentioning in
Park~\cite{Park1994b}, to the best of the authors' knowledge. We will
give a detailed proof of the result above, which has interesting
consequences regarding the (refined) parameters ascents, descents and
plataeux, and also number of blocks, which we will state in
Theorem~\ref{STIRPthe1}. 
\end{remark}

\begin{remark}
For $k=1$ we obtain a bijection between $1$-Stirling permutations
(ordinary permutations) and binary increasing trees, which is very
well known. 
\end{remark}

\begin{proof}
We use a slightly modified bijection to the one given by Janson in
\cite{Jan2008} for Stirling permutation and plane recursive tree,
and use a depth-first walk. The depth-first walk of a rooted (plane)
tree starts at the root, goes first to the leftmost child of the
root, explores that branch (recursively, using the same rules),
returns to the root, and continues with the next child of the
root, until there are no more children left. We think of
$(k+1)$-ary increasing trees, where the empty places are represented
by ``exterior nodes''. Hence, at any time, any (interior) node has $k+1$
children, some of which may be exterior nodes. Between these $k$ edges
going out from a node labelled $v$, we place $k$ integers $v$. 
(Exterior nodes have no children and no labels.)
Now
we perform the depth-first walk and code the $(k+1)$-ary increasing
tree by the sequence of the labels visited as we go around the tree 
(one may think of actually going around the tree like drawing
the contour). 
In other words, we add label $v$ to the code the $k$ first times we
return to node $v$, but not the first time we arrive there or the last
time we return.
A $(k+1)$-ary increasing tree of order 1 is encoded
by $1^k$. A $(k+1)$-ary increasing tree of order $n$ is 
encoded by a string of $k\cdot n$ integers, where each of the labels
$1,\dots, n$ appears exactly $k$ times. In other words, the code is
a permutation of the multiset $\{1^k, 2^k,\dots, n^k\}$. Note that
for each $i$, $1\le i \le n$, the elements occurring between the two
occurrences of $i$ are larger than $i$, since we can only visit
nodes with higher labels.
Hence the code is a \kSp.
Moreover, adding a new node $n+1$ at one of the $kn+1$ free positions
(i.e., the positions occupied by exterior nodes) corresponds to
inserting the $k$-tuple $(n+1)^k$ 
in the code at one of $kn + 1$ gaps; note (e.g., by induction) that
there is a bijection between exterior nodes in the tree and gaps in
the code. 
This shows that the code determines the $(k+1)$-ary
increasing tree uniquely and that the coding is a bijection. 
See Figure~\ref{STIRPfig1} for an illustration.

The inverse, starting with a $k$-Stirling permutation $\sigma$ of order $n$
and constructing the corresponding $(k+1)$-ary increasing tree can be
described as follows. 
We proceed recursively starting at step one by
decomposing the permutation as $\sigma=\sigma_{1}1\sigma_2 1\dots
\sigma_{k} 1 \sigma_{k+1}$,  
where (after a proper relabelling) the $\sigma_i$'s are again $k$-Stirling permutations.
Now the smallest label in each $\sigma_i$ is attached to the root node
labelled 1. 
We recursively apply this procedure to each  $\sigma_i$ to obtain the
tree representation. 
\end{proof}

\begin{figure}[htb]
\centering
\includegraphics[angle=0,scale=0.7]{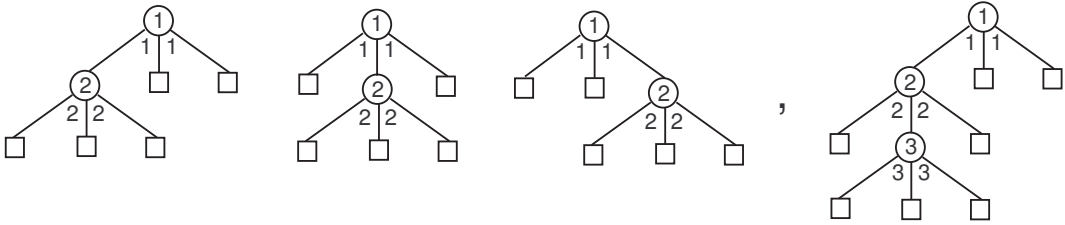}
\caption{The three ternary trees of order $2$ encoded by 2211, 1221
  and 1122; an order 3 ternary increasing tree encoded by the sequence
  233211.} \label{STIRPfig1} 
\end{figure}

Now we relate the distribution of $j$-ascents, $j$-descents and $j$-plateaux 
in $k$-Stirling permutations with certain parameters 
in $(k+1)$-ary increasing trees. In order to do so we introduce
two kinds of parameters. The parameter $D_{n,j}$, standing for ``$j$th
children'' , 
counts the number of nodes in a random $(k+1)$-ary increasing tree of order
$n$ that are the 
$j$th children of their respective parents, going from left to right,
with $1\le j \le k+1$.
Similarly, 
the parameter $L_{n,j}$, standing for ``leaves'' of type $j$, counts
the number of exterior nodes that
are $j$th children of their parents, $1\le j \le k+1$.
We thus have  
\begin{equation}
  \label{ld}
L_{n,j}=n-D_{n,j},
\qquad 1\le j\le k+1,
\end{equation}
and, counting the total numbers of interior and exterior children,
\begin{align}\label{dla}
\sum_{j=1}^{k+1}D_{n,j}=n-1,&&&
\sum_{j=1}^{k+1}L_{n,j}=kn+1.
\end{align}

Concerning the number of blocks in $k$-Stirling permutation, we introduce 
one more parameter in $(k+1)$-ary increasing trees.
Let $LR_n$ denote the number of (interior) nodes that have the
property that the path  
to the root consists exclusively of the leftmost or rightmost possible
edge at
each node, i.e., the edge in position 1 or $k+1$, and no  
other ``inner'' edges. Subsequently, we will call such nodes
\emph{left-right nodes}. 
The root is trivially a left-right node.

\begin{theorem}
\label{STIRPthe1}
Let $k\ge1$.
Under the bijection in \refT{STIRPprop1T},
the numbers of $j$-ascents $X_{n,j}$, $j$-descents $Y_{n,j}$ and
$j$-plateaux $Z_{n,j}$ in a $k$-Stirling permutation of order $n$
coincide with the (shifted) numbers of 
$j$-children $D_{n,j}$,  
and $j$-leaves $L_{n,j}$
in a $(k+1)$-ary increasing tree of order $n$ by the formulas
\begin{align*}
  X_{n,j}&=D_{n,j+1}, &&1\le j\le k,
\\
  Y_{n,j}&=D_{n,j}, &&1\le j\le k,
\\
  Z_{n,j}&=L_{n,j+1}=n-D_{n,j+1}, &&1\le j\le k-1.
\end{align*}
As a consequence, for the numbers of ascents, descents and plateaux,
\begin{align*}
  X_{n}&=n-D_{n,1}=L_{n,1},
\\
  Y_{n}&=n-D_{n,k+1}=L_{n,k+1},
\\
  Z_{n}&=\sum_{j=2}^k L_{n,j}.
\end{align*}
Furthermore, the number of blocks $S_n$ in a $k$-Stirling permutations
of order $n$ coincides with the number 
of left-right nodes in the corresponding  
$(k+1)$-ary increasing trees of order $n$,
\begin{equation}\label{slr}
S_n= LR_n.
\end{equation}
\end{theorem}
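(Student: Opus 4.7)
The plan is to prove every equation by tracking, under the bijection of \refT{STIRPprop1T}, exactly which local configuration in the $(k+1)$-ary tree produces a given feature in the code. The key observation I will start from is that if an interior node $u$ has children $c_1,\dots,c_{k+1}$, then $u$'s subtree is encoded by the depth-first walk as $W(c_1)\,u\,W(c_2)\,u\,\dotsm\,u\,W(c_{k+1})$, where $W(c_j)$ is the code of $c_j$'s subtree and is empty exactly when $c_j$ is exterior. Thus the $j$th copy of $u$, for $1\le j\le k$, is immediately followed by the first symbol of $W(c_{j+1})$ and preceded by the last symbol of $W(c_j)$ (or, in the degenerate cases, by an occurrence of an ancestor's label or the sentinel~$0$).

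First I will establish the $j$-ascent and $j$-plateau identities. If $c_{j+1}$ is interior with label $w$, the first symbol of $W(c_{j+1})$ is some label $\ge w>u$, so the index of the $j$th copy of $u$ is an ascent, and since $u$ there is indeed its $j$th occurrence it is a $j$-ascent. If instead $c_{j+1}$ is exterior and $j<k$, the $j$th $u$ is followed directly by the $(j+1)$th $u$, a $j$-plateau. Summing over all interior nodes and over the child positions $j+1$ yields $X_{n,j}=D_{n,j+1}$ for $1\le j\le k$ and $Z_{n,j}=L_{n,j+1}$ for $1\le j\le k-1$, whereupon \eqref{ld} gives the stated $Z_{n,j}=n-D_{n,j+1}$.

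For $j$-descents with $j\ge2$ I will simply invoke the identity $Y_{n,j+1}=X_{n,j}$ already noted in \refS{Sprel}, yielding $Y_{n,j}=D_{n,j}$ for $2\le j\le k$. The case $j=1$ I plan to handle by hand: at a $1$-descent, $a_{i+1}$ is the first copy of some label $u$, and this first copy is written precisely when the walk exits $W(c_1^u)$. If $c_1^u$ is interior the preceding symbol is a descendant label of $c_1^u$ and hence exceeds $u$, giving a $1$-descent; if $c_1^u$ is exterior the preceding symbol is either absent (corresponding to $i=0$, excluded) or is a smaller ancestor label, yielding an ascent rather than a descent. Hence $Y_{n,1}$ counts interior $1$st-children, i.e.\ $Y_{n,1}=D_{n,1}$. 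The three sum formulas for $X_n$, $Y_n$, $Z_n$ then follow immediately from \eqref{xyzx}--\eqref{xyzz}, \eqref{ld}, and the identity $\sum_{j=1}^{k+1}D_{n,j}=n-1$ from \eqref{dla}.

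Finally, for the block identity $S_n=LR_n$, I will characterise when a label $v$ forms a block in tree terms. If $v$ lies in the subtree of $c_p^u$ for some ancestor $u$, then all $k$ copies of $v$ occur inside $W(c_p^u)$. When $p=1$ these copies precede every copy of $u$, and when $p=k+1$ they follow every copy, so $u$ does not sandwich $v$; but when $2\le p\le k$, the $(p-1)$th and $p$th copies of $u$ (hence in particular its first and last) bracket every copy of $v$, so by definition $v$ does not form a block. Therefore $v$ forms a block iff for every ancestor $u$ the edge from $u$ toward $v$ sits in position $1$ or $k+1$, which is precisely the defining property of a left-right node, proving \eqref{slr}. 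I expect this final step, converting the combinatorial sandwich condition into the geometric condition on the root-path, to be the main conceptual point of the proof; the earlier identities are essentially bookkeeping based on the depth-first encoding.
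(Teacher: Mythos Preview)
Your proof is correct and, for the ascent/descent/plateau identities, follows essentially the same line as the paper: both analyse the local structure of the depth-first code $W(c_1)\,u\,W(c_2)\,u\dotsm u\,W(c_{k+1})$. You are more explicit, and your split of the descent case into $j\ge2$ (via $Y_{n,j+1}=X_{n,j}$) and $j=1$ (by hand) is sound but unnecessary: a uniform argument works, since for every $1\le j\le k$ the $j$th copy of $u$ is preceded by a label larger than $u$ if and only if $c_j^u$ is interior.

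The genuine difference is in the block identity. The paper argues \emph{inductively}: starting from a single block and a single left-right node, it observes that inserting node $n+1$ creates a new block in the code precisely when it becomes a new left-right node in the tree, and concludes $S_n=LR_n$ by induction on $n$. You instead give a \emph{static} characterisation: a label $v$ forms a block if and only if, for every ancestor $u$, the edge from $u$ toward $v$ is in position $1$ or $k+1$. Your argument is arguably cleaner and more informative, since it identifies which label corresponds to which block. One small point to make explicit: you implicitly use that a non-ancestor $i<v$ can never sandwich $v$, which holds because the subtree codes of $i$ and $v$ are then disjoint substrings of the full code (they lie in different $W(c_p^w)$'s of their lowest common ancestor $w$). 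With that remark added, your block argument is complete.
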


\begin{proof}
Using the stated bijection we observe that a $(j+1)$-child, $1\le j\le k$, 
corresponds to a $j$-ascent, since the step from the parent node $v$
to the $(j+1)$-child $u$ corresponds  
to having recorded $j$ times the label of the parent $v$ and then
another label $w$, with $w\ge u$. Similar considerations prove the
results for $j$-descents and $j$-plateaux. 
The results for $X_n$, $Y_n$, $Z_n$ then follow from
\eqref{xyzx}--\eqref{xyzz} and \eqref{dla}. 

Concerning the connection between blocks and left-right nodes we make
the following observation. 
Starting with a $(k+1)$-ary increasing tree, and inserting nodes one
after another, we note that only a leftmost or rightmost child leads
to a new  
block in the corresponding $k$-Stirling permutation. Hence, the number
of left-right nodes is equal  
to the number of blocks, since we start with a single block
$1^k$ and a single left-right node (the root).
\end{proof}

\begin{remark}
Note that the number of leaves in $(k+1)$-ary increasing trees of
order $n\ge2$  
corresponds to the number of locally maximal substrings $l^k=l\dotsm
l$, i.e.~substrings $il^kj$, with $0\le i,j<l$, for $2\le l \le n$, in
$k$-Stirling permutations of order $n$, which can also be seen from
the bijection.  
\end{remark}

\begin{remark}
  In the case $k=2$ we thus have the symmetric situation that
  $X_n=L_{n,1}$,
  $Y_n=L_{n,3}$ and $Z_n=L_{n,2}$, which by \refT{STIRPtheExch} below
  gives a new proof that $X_n$, $Y_n$ and $Z_n$ have the same
  distribution, and further are exchangeable, as shown by
  \cite{Bona2007} and \cite{Jan2008}. We see also that this will not
  hold for larger $k$, see for example \refT{Tmean}.
\end{remark}

  For $k=2$, \refT{STIRPprop1T} gives a bijection between \Sp{} and
  ternary increasing trees, while Janson \cite{Jan2008} gives a
  bijection with plane recursive trees of order $n+1$. 
(These are  related by a bijection given in \refS{Sfurther}.)
Next we will show that also for $k>2$, there is a suitable family of
  generalized plane recursive trees that is closely related to
  $k$-Stirling permutations. 

\begin{definition}
\label{STIRPprop2D}
For  $k\ge 2$,
the family of \emph{$k$-plane recursive trees} 
is specified by the degree-weight generating function
$\varphi(t)={(1-(k-1)t)^{-\frac{1}{k-1}}}$, i.e.~it is the
family of generalized plane recursive trees with
 $\varphi_0=1$, $c_1=k$ and $c_2=-(k-1)$.
Explicitly, $\gf_d=\frac1{d!}\prod_{l=1}^d\bigpar{(k-1)(l-1)+1}$.
Consequently, by \eqref{t} and \eqref{tn},
the generating function $T(z)$ and the 
total weight $T_n$ are given by
\begin{equation*}
T(z)=\frac{1}{k-1} \Big(1- (1-kz)^{\frac{k-1}{k}}  \Big),
\qquad T_{n+1}= \prod_{l=1}^{n}(k(l-1)+1),\,\text{with}\enspace
T_1=T_2=1,
\end{equation*}
$\ga=\frac1{k-1}$,
and the probability of attaching to node $v$ at step $i+1$ is given by
$p(v)=\frac{d(v)+\frac{1}{k-1}}{\frac{ki}{k-1}-1}$. 
\end{definition}

For $k=2$, these are the plane recursive trees in \refE{Eplane}.

\begin{remark}
We did not succeed in finding a bijective correspondence between
\kSp{s} and $k$-plane recursive trees, 
in the case of $k>2$, 
generalizing the bijection in \cite{Jan2008} for
$k=2$,
since for $k>2$ it seems difficult to obtain a
combinatorial interpretation 
of the weights of the trees. 
We leave this as an open problem.
However, the distribution of the leaves
still coincides with the distribution of the number of ascents or descents. 
\end{remark}

\begin{theorem}
\label{STIRPthe2}
The number (total weight) of $k$-plane recursive trees of order $n+1$
equals the number of 
$k$-Stirling permutations of order $n$, $T_{n+1}=Q_n$.
Moreover, the distribution of 
the number $\xL_{n+1}$ of
leaves of $k$-plane recursive trees of order $n+1$ coincides with the
distribution of the number $X_n$ of
ascents (descents) of $k$-Stirling permutations of order $n$.
\end{theorem}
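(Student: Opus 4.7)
My plan is to treat the two claims separately. The identity $T_{n+1}=Q_n$ is an immediate comparison of the product formulas: from \refD{STIRPprop2D} we have
\begin{equation*}
T_{n+1}=\prod_{l=1}^{n}\bigpar{k(l-1)+1}=1\cdot\prod_{l=2}^{n}\bigpar{k(l-1)+1}=\prod_{i=1}^{n-1}(ki+1),
\end{equation*}
which equals $Q_n$ by~\eqref{qn}.

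For the distributional identity, since we lack a direct bijection, the strategy is to realise both $\xL_{n+1}$ and $X_n$ via the same two-colour P\'olya urn and then match initial states. By \refT{STIRPthe1}, $X_n=L_{n,1}$ counts the position-$1$ exterior nodes in the associated random $(k+1)$-ary increasing tree. Put $A_n:=L_{n,1}$ and $B_n:=L_{n,2}+\dots+L_{n,k+1}$, so that $A_n+B_n=kn+1$. The growth rule for $(k+1)$-ary increasing trees picks an exterior slot uniformly at random; drawing a type-$A$ ball produces $(A,B)\mapsto(A,B+k)$ (the chosen exterior becomes interior, and the new interior node contributes one position-$1$ exterior and $k$ other-position exteriors), while drawing a type-$B$ ball gives $(A,B)\mapsto(A+1,B+k-1)$. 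The initial state at $n=1$ is $(1,k)$.

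For the $k$-plane recursive tree I assign to every node one \emph{node-ball} and to every edge exactly $k-1$ \emph{edge-balls}, so the total number of balls at order $i$ is $i+(k-1)(i-1)=ki-k+1$, matching the denominator of the attachment probability $p(v)=\bigpar{d(v)+\tfrac{1}{k-1}}/\bigpar{\tfrac{ki}{k-1}-1}$ from \refD{STIRPprop2D}. Each node $v$ then carries $(k-1)d(v)+1$ balls, so drawing a ball uniformly at random selects the parent of the next insertion with exactly probability $p(v)$. Classify balls as type~$A$ (node-balls of leaves) and type~$B$ (all others). A short case analysis shows: drawing type~$A$ reclassifies the struck leaf's node-ball as type~$B$, the new child contributes one type-$A$ node-ball, and the new edge contributes $k-1$ type-$B$ edge-balls, giving $(A',B')\mapsto(A',B'+k)$; drawing type~$B$ attaches a new leaf to a non-leaf, giving $(A',B')\mapsto(A'+1,B'+k-1)$. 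These rules are identical to those of the $(k+1)$-ary urn above.

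Finally I match initial states. The $k$-plane recursive tree starts at $i=1$ with $(A'_1,B'_1)=(1,0)$; the unique available ball forces the first step to $(A'_2,B'_2)=(1,k)$, which is precisely the initial state of the $(k+1)$-ary urn. An induction over the remaining $n-1$ draws then yields $(A'_{n+1},B'_{n+1})\eqd(A_n,B_n)$, and in particular $\xL_{n+1}\eqd L_{n,1}=X_n$. The main subtlety I expect is verifying that the node-ball/edge-ball decomposition really does reproduce $p(v)$ so that a uniform draw corresponds to the $k$-plane-recursive attachment step; once this bookkeeping is confirmed the coupling of the two processes — and the identification with the ascents via \refT{STIRPthe1} — is automatic.
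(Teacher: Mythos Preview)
Your argument is correct, but it takes a more elaborate route than the paper's. For the first claim you both simply compare the product formulas. For the distributional identity, the paper argues directly by induction on $n$: given $\xL_{n+1}=m$, adding node $n+2$ to a given leaf has probability $p(v)=\frac{1/(k-1)}{k(n+1)/(k-1)-1}=\frac{1}{kn+1}$, so $\P(\xL_{n+2}=m\mid\xL_{n+1}=m)=m/(kn+1)$; and on the permutation side, inserting $(n+1)^k$ at one of the $kn+1$ gaps destroys an ascent iff it lands at an ascent, so $\P(X_{n+1}=m\mid X_n=m)=m/(kn+1)$ as well. Thus both processes satisfy the same one-step recursion, and the result follows.

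Your approach reaches the same underlying two-colour Markov chain but via two extra layers: on the $X_n$ side you pass through \refT{STIRPthe1} and the $(k+1)$-ary tree rather than working directly with the gap structure of the permutation; on the $\xL$ side you build an explicit node-ball/edge-ball urn to realise the attachment probabilities rather than simply reading off $p(v)$ for a leaf. Both detours are sound and make the urn picture very explicit, which could be useful in other contexts, but the paper's argument is shorter and self-contained: it needs neither \refT{STIRPthe1} nor a physical ball model, only the formula for $p(v)$ and the obvious behaviour of ascents under insertion.
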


\begin{proof}
The first part is already shown. 

The second part is trivial for $n=1$, with one leaf and one ascent.
We proceed by induction, and suppose that the relation is true for
$n$: $\xL_{n+1}\eqd X_n$.
We observe that adding the new node labelled $n+2$ to a leaf
does not change the number of leaves, whereas adding the new node at
any other place gives rise to a new leaf. 
Further, by the formula for $p(v)$ above with d(v)=0, the probability of adding
node $n+2$ to a given leaf in a tree of order $n+1$ is 
$p(v)=\frac{\frac{1}{k-1}}{\frac{k(n+1)}{k-1}-1}=\frac1{kn+1}$. 
Hence, conditioned on the number of leaves $\xL_{n+1}$ being $m$,
we have $\xL_{n+2}=m$ or $m+1$ with
\begin{equation}
  \label{l}
\P(\xL_{n+2}=m\mid \xL_{n+1}=m)
=\frac m{kn+1}.
\end{equation}

Similarly, when adding a string $(n+1)^k$ to a \kSp{} of order $n$, we
will always create a new ascent, and we will destroy one if and only
if we add the string at an ascent. Since there are $kn+1$ gaps where
the new string can be added, conditioned on the number $X_n$ of ascents being
$m$, 
we have $X_{n+1}=m$ or $m+1$ with
$\P(X_{n+2}=m\mid X_{n+1}=m)=\frac m{kn+1}$. 
This is the same relation as \eqref{l}, and thus $\xL_{n+2}\eqd
X_{n+1}$, which verifies the induction step.
\end{proof}

\begin{remark}
The distribution of the number of leaves is fairly well studied.
Let $T(z,v)=\sum_{n\ge 1}T_{n,m}\frac{z^n}{n!}v^m$ denote the bivariate generating function of the number of
$k$-plane recursive trees having exactly $m$ leaves, also encoding the number $k$-Stirling permutations of order $n-1$ having $m$ descents. 
Bergeron et al.~\cite{BerFlaSal1992} determined the generating function $T(z,v)$ by the implicit equation 
\begin{equation*}
\int_{0}^{T}\frac{dt}{(v-1)\varphi_0 + \varphi(t)}=z,
\end{equation*}
Note that the implicit equation is true for a much larger class of increasing trees; moreover 
one may derive the normal limit of the number of leaves from the implicit equation above, see \cite{BerFlaSal1992}.
\end{remark}

\subsection{$k$-bundled increasing trees and $k$-bundled Stirling permutations}

\begin{definition}
\label{Dkbundled}
For  $k\ge 0$,
the family of 
\emph{$(k+1)$-bundled increasing trees} is specified by the
degree-weight generating function 
$\varphi(t)=\frac{1}{(1-t)^{k+1}}$, 
i.e.~it is the family of generalized plane recursive trees with
$\varphi_0=1$, $c_1=k+2$ and $c_2=-1$.
Explicitly, $\gf_j=\binom{k+j}j$.
Consequently, by \eqref{t} and \eqref{tn},
the generating function $T(z)$ and the 
total weight $T_n$ are given by
\begin{equation*}
T(z)=1-(1-(k+2)z)^{\frac1{k+2}} ,\qquad T_{n}= \prod_{l=1}^{n-1}(l(k+2)-1),
\end{equation*}
$\ga=k+1$,
and the the probability attaching to node $v$ at step $i+1$ is given
by $p(v)=\frac{d(v)+k+1}{(k+2)i-1}$.  
\end{definition}

\begin{remark}\label{Rkbundled}
One may think of $(k+1)$-bundled increasing trees of order $n$ as
consisting of a root node labelled $1$ which has $k+1$ positions, 
with a (possibly empty) sequence of labelled $(k+1)$-bundled
increasing trees  attached to each position (with disjoint sets of
labels, forming a partition of $\{2,\dots,n\}$). Equivalently, one
may think of each node as having $k$ separation walls, which can be
regarded as a special type of edges.

Note that the $1$-bundled increasing trees are just ordinary plane
recursive trees, cf.~\refE{Eplane}, and that the bijection stated below
also holds for this case, which corresponds to the result of
\cite{Jan2008} that
$\mathcal{B}_n(1)=\cT_n\cong \mathcal{Q}_{n-1}(2)$,
since obviously $\bcQ_n(0)\cong\cQ_{n-1}(2)$ by relabelling.
\end{remark}

\begin{theorem}
\label{STIRPprop3T}
The family $\mathcal{B}_n=\mathcal{B}_n(k+1)$ of
$(k+1)$-bundled increasing trees of order $n$ is
in a natural bijection with $k$-bundled Stirling permutations,
$\mathcal{B}_n(k+1)\cong \overline{\mathcal{Q}}_n(k)$. 
\end{theorem}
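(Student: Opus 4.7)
The plan is to imitate the depth-first walk bijection in the proof of \refT{STIRPprop1T}, now using the ``bundles-and-walls'' picture from \refR{Rkbundled}: each node of a $(k+1)$-bundled increasing tree carries $k+1$ ordered positions separated by $k$ internal walls, and each position holds a (possibly empty) ordered sequence of child subtrees. Starting at the root, I traverse the tree in depth-first, left-to-right order and record labels at the following distinguished moments. At each non-root node $v$ I record the label $v$ a total of $k+2$ times (the initial arrival at $v$, the $k$ wall crossings between consecutive bundles of $v$, and the final departure from $v$), and at the root I record the label $1$ only at its $k$ wall crossings. The resulting word has exactly the multiset $\{1^k,2^{k+2},\dots,n^{k+2}\}$ and total length $k+(n-1)(k+2)$.

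To check that the code is a $k$-bundled \Sp, observe that between any two consecutive occurrences of a label $v$ the walk remains inside a single bundle of $v$, so it only visits descendants of $v$, all of which carry labels strictly greater than $v$. Structurally, the code of the subtree rooted at a non-root $v$ is a substring of the form $v\cdot B_1\cdot v\cdot B_2\cdot v\dotsm v\cdot B_{k+1}\cdot v$, where $B_j$ is the concatenation of the codes of $v$'s children in bundle $j$; the analogous statement holds at the root, only without the outer two copies of~$1$.

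For bijectivity I would give an explicit recursive inverse. Given $\sigma\in\bcQ_n(k)$, locate its $k$ copies of $1$; they split $\sigma$ into $k+1$ (possibly empty) factors $\sigma=\pi_1\cdot1\cdot\pi_2\cdot1\dotsm1\cdot\pi_{k+1}$, which are assigned to the $k+1$ bundles of the root. Each $\pi_j$ is itself a generalized Stirling permutation on a sub-multiset and, by the block decomposition described in \refS{Sprel}, factorises uniquely into its maximal blocks of the form $v\dotsm v$; these blocks, in left-to-right order, furnish the ordered list of children in bundle $j$ of the root, and after stripping the outermost two copies of $v$ from each block we recursively decode its interior as the subtree rooted at $v$. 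The recursion terminates, because the labels appearing in each recursive call are strictly larger. As a sanity check on cardinalities, inserting a new leaf labelled $n+1$ into one of the $k+1$ bundles of some existing node corresponds exactly to inserting the substring $(n+1)^{k+2}$ at one of the $\ell_{n-1}+1=(n-1)(k+2)-1$ gaps of the code, matching the factor in \eqref{bqn}; both sides therefore have the same cardinality $\bQ_n(k)$ at every order.

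The chief obstacle is verifying that the recursive parsing is unambiguous---that the $k$ copies of $1$ used as delimiters really correspond to the root's wall crossings, and likewise at every deeper step. This is where the (generalized) Stirling condition is essential: a copy of $1$ is separated from any other label by strictly larger elements, so the $k$ copies of $1$ must occupy exactly the $k$ wall positions in the code of the root; applying the same argument to the smallest label present in each recursively produced sub-word handles deeper levels, and the block decomposition from \refS{Sprel} guarantees that the recursion step is well-defined. The base case $n=1$ (the single-node tree, coded by $1^k$) is immediate.
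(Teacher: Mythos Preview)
Your approach is essentially identical to the paper's: both use the depth-first contour walk, recording the label of a non-root node $v$ once on arrival, once at each of its $k$ walls, and once on departure (the paper phrases this as labelling the incoming edge of $v$ by $v$ and each wall by $v$, then reading off edge labels along the contour), and both confirm bijectivity via the insertion correspondence. Your explicit recursive inverse is a nice addition that the paper omits.

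One small slip to fix: in your cardinality check you write $\ell_{n-1}+1=(n-1)(k+2)-1$ for the number of gaps when inserting $(n+1)^{k+2}$, but inserting label $n+1$ into a code of order $n$ uses $\ell_n+1=k+(n-1)(k+2)+1=n(k+2)-1$ gaps (matching the paper's $(k+2)(n-1)+k+1$ insertion positions on the tree side). This is only an indexing error and does not affect the argument.
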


\begin{proof}
We proceed as before using a depth-first walk. We label each auxilliary separation wall of a node labelled $v$ by the label of the node $v$. 
Moreover, we label any (proper) edge by the label of the child. 
Hence, at any time, any node has at least $k$ outgoing edges, thinking
of the walls as a special type of edges.  
Now we perform the depth-first walk and code the $k$-bundled increasing
tree by the sequence of the labels visited on the edges, under the additional rule that a label 
on a separation wall only contributes once. 
Since every proper edge is traversed twice, and every label except 1
occurs on exactly one proper edge,
a $(k+1)$-bundled increasing tree of order $n$ is 
encoded by a string of $(k+2)(n-1)+k$ integers, where each of the labels
$2,\dots, n$ appears exactly $k+2$ times and label $1$ appears $k$
times. In other words, the code is 
a permutation of the multiset $\{1^k, 2^{k+2},\dots, n^{k+2}\}$. Note that
for each $i$, $1\le i \le n$, the elements occurring between the two
occurrences of $i$ are larger than $i$, since we can only visit
nodes with higher labels.
Hence the code is a \kbSp.
Moreover, adding a new node $n+1$ at one of the 
$(k+2)(n-1) + k+1$ possible places
corresponds to
inserting the $(k+2)$-tuple $(n+1)^{k+2}$
in the code, at one of $(k+2)(n-1) + k+1$ possible places. This shows
that the code determines the $(k+1)$-bundled 
increasing tree uniquely and that the coding is a bijection.  
See Figure~\ref{STIRPfig2} for an illustration.
\end{proof}

\begin{figure}[htb]
\centering
\includegraphics[angle=0,scale=0.7]{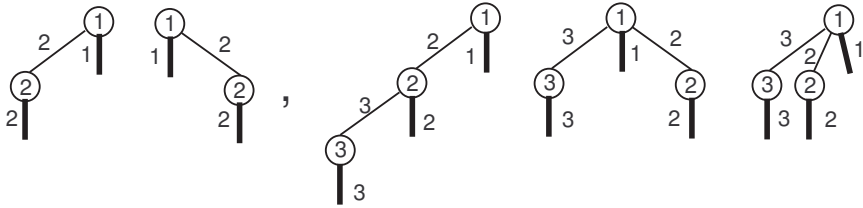}
\caption{The two 2-bundled increasing trees of order $2$ encoded by
  2221, 1222; Three 2-bundled increasing trees of order $3$ encoded by
  the sequences 2333221, 3331222 and 3332221.} 
\label{STIRPfig2}
\end{figure}

Next we relate the distribution of ascents, descents and plateaux in
$k$-bundled Stirling permutations with certain parameters in
$(k+1)$-bundled increasing trees. In order to do so we introduce
three parameters for a  $(k+1)$-bundled increasing tree $\tau$.
The parameter $B_A$ counts the number
of ascents in the bundles of $\tau$, plus the number of non-empty
bundles, plus 1 if the first bundle of the root is empty, where an
ascent in a bundle occurs if the root of a subtree is smaller then
the root of the next subtree, going from left to right. The
parameter $B_D$ counts the number descents in the bundles of $\tau$,
plus the number of non-empty bundles, plus 1 if the last bundle of
the root is empty, where a descent in a bundle occurs if the root of
subtree is larger then its neighbour. The number $B_E$ counts the
number of empty bundles of the nodes with labels larger than one
plus the number of empty inner bundles of the root. With these
definitions, the following correspondences are straightforward.

\begin{theorem}
\label{STIRPthe3}
Under the bijection in \refT{STIRPprop3T},
the numbers of ascents, descents and plateaux in a $k$-bundled Stirling
permutation of order $n$
coincide with the parameters  $B_A$, $B_D$
and $B_E$ in a $(k+1)$-bundled increasing tree of order $n$.
\end{theorem}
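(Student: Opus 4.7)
The plan is to verify the three identities by tracing through the depth-first walk encoding of \refT{STIRPprop3T} and classifying each pair of consecutive entries of the resulting \kbSp{} as an ascent, descent, or plateau.

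First I would describe the local structure of the code around a single node $v$: the depth-first walk restricted to the subtree rooted at $v$ contributes the interleaved block
\[ [\text{entry }v] \; [B_1] \; v \; [B_2] \; v \; \cdots \; v \; [B_{k+1}] \; [\text{exit }v], \]
where the intermediate $v$'s are the $k$ walls of $v$, and a non-empty bundle $B_b$ with children $u_{b,1},\dots,u_{b,m_b}$ (in left-to-right order) expands as $u_{b,1}\,[\text{subtree of }u_{b,1}]\,u_{b,1}\,u_{b,2}\cdots u_{b,m_b}\,[\text{subtree of }u_{b,m_b}]\,u_{b,m_b}$. For the root, the entry/exit edges are absent and are replaced by the boundary convention $a_0=a_{\ell+1}=0$.

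Next I would enumerate the possible transitions between two consecutive entries of the code. There are only three kinds: (i) the exit label $u$ of one child of $v$ followed by the entry label $u'$ of the next sibling of $u$ in the same bundle, giving an ascent or a descent according to whether $u<u'$ or $u>u'$ -- these are exactly the \emph{bundle ascents/descents}; (ii) a wall/edge-entry of $v$ adjacent to the label of the first (respectively last) child of an adjacent bundle, which, since descendants carry strictly larger labels, is always an ascent (respectively descent) -- consequently each non-empty bundle contributes exactly one ``start ascent'' and one ``end descent''; (iii) two consecutive entries of the same label arising from an empty bundle sandwiched between two walls, or between a wall and an adjacent edge, of the same node, giving a plateau. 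Summing over all bundles, the totals reduce to (bundle ascents)$\,+\,$(number of non-empty bundles), (bundle descents)$\,+\,$(number of non-empty bundles), and the number of empty bundles, \emph{modulo the root boundary corrections} handled next.

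The only delicate step is at the root, where the missing parent edge forces the boundary values $a_0=a_{\ell+1}=0$ to stand in for the root label $1$: consequently an empty first bundle $B_1$ of the root produces the \emph{ascent} $a_0=0<a_1=1$ at position $i=0$ instead of a plateau, and symmetrically an empty last bundle $B_{k+1}$ of the root produces a descent at position $i=\ell$. These are precisely the ``$+1$ if the first (respectively last) bundle of the root is empty'' corrections appearing in the definitions of $B_A$ and $B_D$, and they also explain why $B_E$ counts only the \emph{inner} empty bundles of the root. Putting all pieces together yields the three claimed equalities. The main (and quite mild) obstacle is this asymmetric bookkeeping at the root; everywhere else the classification of transitions is completely uniform.
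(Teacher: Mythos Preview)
Your proposal is correct. The paper itself gives no proof of \refT{STIRPthe3}: it simply states, immediately before the theorem, that ``with these definitions, the following correspondences are straightforward.'' Your argument is precisely the direct verification the paper leaves to the reader---tracing the depth-first walk, classifying every consecutive pair of code entries, and handling the boundary effect at the root---so there is nothing to compare; you have spelled out what the authors omitted.
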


\begin{remark}
Note that the number of leaves in $(k+1)$-bundled increasing trees
of order $n$ corresponds to the number of sequences of the form
$l^{k+2}=l\dotsm l$, with $2\le l \le n$, in $k$-bundled Stirling
permutations of order $n$, as for $k$-ary increasing trees.
Moreover, the parameter ``number of descendants of node $j$'' in  a
$(k+1)$-bundled increasing tree of order $n$, with $2\le j\le n$,
counts the number of different entries $l$ with $j<l\le n$  between
the first and the last occurrence of $j$ in the corresponding
$k$-bundled Stirling permutation of order $n$.
\end{remark}

\section{Further bijections}
\label{Sfurther}
The bijections of Theorem~\ref{STIRPprop1T} (with $k=2$) and \cite{Jan2008}
(or \refT{STIRPprop3T} with $k=0$)
imply a bijection between ordinary plane recursive trees of order $n+1$ and 
ternary increasing trees of order $n$, using the connections to
$2$-Stirling permutations. 
In the following we will give two direct bijections, which 
both encompass this bijection between plane recursive trees and
ternary increasing trees. 

First we give a bijection 
between sequences of $k$-bundled increasing trees and $(k+2)$-ary
increasing trees,  
which for $k=1$ just gives the desired bijection. 

Let $\Seq(\mathcal{B})_n=\Seq(\mathcal{B})_n(k)$ denote the family of
sequences of $k$-bundled increasing trees with total order $n$,
labelled with disjoint sets of labels forming a partition of
\set{1,\dots,n}.
(Note that our notation slightly abuses the common sequence notation
$\Seq$ of combinatorial objects, since we also assume properly  
distributed labels.) 

\begin{remark}
  \label{RSEQ}
By introducing a new
root labelled 0, connecting all roots 
of the sequence with the new root, and performing a proper
relabelling,
$\Seq(\cB)_n$ is in bijection with the family of increasing plane
trees of order $n+1$ where each node except the root is $k$-bundled as
in \refD{Dkbundled}. 
(Equivalently, 
$\Seq(\cB)_n$ is in bijection with the family of $k$-bundled
increasing trees of order $n+1$ where the root has only the first
bundle non-empty.)
\end{remark}

\begin{theorem}
\label{STIRPprop4T}
The family $\Seq(\mathcal{B})_n=\Seq(\mathcal{B})_n(k)$ of sequences
of $k$-bundled  
increasing trees of total order $n$ is in bijection with
$\mathcal{A}_n(k+2)$, the family of $(k+2)$-ary increasing trees of
order $n$:
$\Seq(\mathcal{B})_n(k) \cong \mathcal{A}_n(k+2)$.
\end{theorem}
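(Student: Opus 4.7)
The plan is to construct the bijection $\mathcal{A}_n(k+2)\to\Seq(\mathcal{B})_n(k)$ directly by a recursive decomposition that expresses both sides as ``root labelled $1$ together with an ordered $(k+2)$-tuple of substructures of the same kind with partitioned label sets''.

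The $(k+2)$-ary side splits immediately from the degree-weight generating function $\varphi(t)=(1+t)^{k+2}$: an element $t\in\mathcal{A}_n(k+2)$ is a root labelled $1$ equipped with an ordered tuple $(T_1,\dots,T_{k+2})$ of (possibly empty) $(k+2)$-ary increasing subtrees whose label sets partition $\{2,\dots,n\}$. On the sequence side I would argue as follows. In any $(\tau_1,\dots,\tau_m)\in\Seq(\mathcal{B})_n(k)$ the label $1$ is the global minimum, hence the root of exactly one tree $\tau_{j_0}$; and since $\tau_{j_0}$ is $k$-bundled, its root splits into $k$ ordered bundles $B_1,\dots,B_k$, each itself a sequence of $k$-bundled trees. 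Thus the full sequence is reconstructible from the ordered $(k+2)$-tuple of members of $\Seq(\mathcal{B})$
\[
\bigl((\tau_1,\dots,\tau_{j_0-1}),\;B_1,\;B_2,\dots,B_k,\;(\tau_{j_0+1},\dots,\tau_m)\bigr),
\]
whose label sets partition $\{2,\dots,n\}$. This is structurally the same data as the $(k+2)$-ary decomposition.

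Given this parallel, I would define $\Phi_n:\mathcal{A}_n(k+2)\to\Seq(\mathcal{B})_n(k)$ by induction on $n$. For $n=0$ both sides are empty singletons. For $n\geq1$, given $t\in\mathcal{A}_n(k+2)$ with root $1$ and subtree-tuple $(T_1,\dots,T_{k+2})$, apply the bijection inductively (after the obvious order-preserving relabelling) to obtain sequences $S_i\in\Seq(\mathcal{B})$ on the same label set as $T_i$; build the $k$-bundled tree $\tau$ with root $1$ whose $k$ bundles are $S_2,\dots,S_{k+1}$; and set
\[
\Phi_n(t):=(S_1,\tau,S_{k+2}),
\]
the concatenation $S_1\cdot(\tau)\cdot S_{k+2}$ in $\Seq(\mathcal{B})_n(k)$. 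The inverse, given $(\tau_1,\dots,\tau_m)\in\Seq(\mathcal{B})_n(k)$, identifies the unique $\tau_{j_0}$ rooted at $1$, extracts the $(k+2)$ pieces $(\tau_1,\dots,\tau_{j_0-1}),B_1,\dots,B_k,(\tau_{j_0+1},\dots,\tau_m)$, applies $\Phi^{-1}$ inductively, and reassembles them as subtrees at the $(k+2)$ positions of a root labelled $1$. A short induction verifies mutual invertibility.

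The main obstacle is verifying that the decomposition of $\Seq(\mathcal{B})_n(k)$ described above really is a bijection onto the set of ordered $(k+2)$-tuples of sub-sequences with partitioned label sets, and that it interacts correctly with relabelling. This amounts to a short combinatorial check---that knowing the position of $\tau_{j_0}$ in the sequence, its $k$ bundles, and the surrounding before- and after-sequences uniquely recovers $(\tau_1,\dots,\tau_m)$---but it is the only point that needs to be written out carefully. As a sanity check, both sides satisfy the same ODE $f'=f^{k+2}$ with $f(0)=1$, where $f=1+T(z)$ for the $(k+2)$-ary family and $f=1/(1-T(z))$ for the sequence family, which by \eqref{t} gives $f=(1-(k+1)z)^{-1/(k+1)}$ and confirms $|\mathcal{A}_n(k+2)|=|\Seq(\mathcal{B})_n(k)|$.
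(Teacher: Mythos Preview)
Your proof is correct and is essentially the same recursive bijection as the paper's: locate the tree in the sequence whose root is labelled $1$, use the left and right remainders of the sequence for the outer positions $S_1$ and $S_{k+2}$, and use the $k$ bundles at that root for the inner positions $S_2,\dots,S_{k+1}$. The paper presents the construction in the direction $\Seq(\mathcal{B})_n(k)\to\mathcal{A}_n(k+2)$ first, while you start from the $(k+2)$-ary side and add the generating-function sanity check, but the underlying map is identical.
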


\begin{remark}\label{RSEQ2}
Recall that $1$-bundled increasing trees  are exactly plane recursive trees.
Moreover, in the case of $k=1$, 
the bijection in \refR{RSEQ} is the standard bijection between
sequences of plane recursive trees of total order $n$ and   
plane recursive trees of order $n+1$; hence $\Seq(\cB)_n(1)\cong\cT_{n+1}$.
See Figure~\ref{STIRPfig3} for an illustration.
It is easily seen that, for $k=1$, the bijection
$\cT_{n+1}\cong\Seq(\cB)_n(1)\cong\cA_n(3)$ constructed in the proof
below yields the correspondence between the two bijections
 $\cA_n(3)\cong \cQ_n(2)$ in \refT{STIRPprop1T} and
$\cT_{n+1}\cong\cQ_n(2)$ in \cite{Jan2008} or
$\cB_{n+1}(1)\cong\bcQ_{n+1}(0)\cong\cQ_{n}(2)$ in \refT{STIRPprop3T}.
\end{remark}
 
\begin{proof}
We use a recursive construction, see Figure~\ref{STIRPfig3}.
For a given sequence of $k$-bundled 
increasing trees, we choose in the first step the tree of the sequence with node labelled 1: this node is going be the root of the $(k+2)$-ary increasing tree. 
Since a $(k+2)$-ary increasing trees has $k+2$ (possibly empty)
subtrees $S_1,\dots,S_{k+2}$, going from left to right, we proceed as
follows. 
The sequence of $k$-bundled 
increasing trees to the left of the tree with root $1$ forms
(recursively) the subtree
$S_1$, conversely the sequence of  
$k$-bundled increasing trees to the right of the tree with root $1$
forms the subtree $S_{k+2}$. 
The $k$ bundles, possibly empty, attached to the tree with root
labelled $1$, form the subtrees $S_2,\dots,S_{k+1}$ of the $(k+2)$-ary
increasing tree. 
Now we can proceed recursively, since the bundles are themselves just
sequences of $k$-bundled increasing trees.

\smallskip
Conversely,
starting with a $(k+2)$-ary increasing tree of order $n$, we recursively build a sequence of $k$-bundled increasing trees as follows. 
In the first step we build a tree with root node labelled 1. The sequence to the left of the tree with root labelled 1
is built from the subtree $S_1$ of the $(k+2)$-ary increasing tree of
order $n$, the sequence on the right from the subtree $S_{k+2}$,  
and the $k$ bundles are built from the subtrees $S_2,\dots,S_{k+1}$.
We proceed recursively until the sequence is constructed.
Note that during this process, we connect any leftmost or rightmost
child of a node $v$ to the same  parent as $v$.
\end{proof}

\begin{figure}[htb]
\centering
\includegraphics[angle=0,scale=0.65]{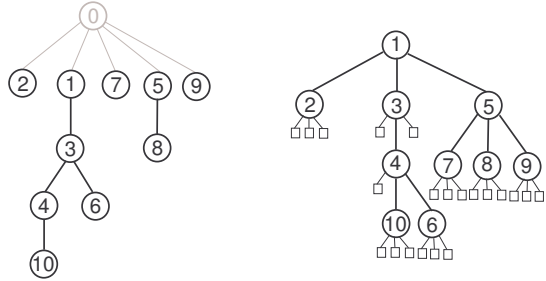}
\caption{A sequence of 1-bundled increasing trees of order 10, or
  equivalently a plane recursive tree of order 11,  
and the corresponding ternary increasing tree of order 10.}
\label{STIRPfig3}
\end{figure}

Next we consider a bijection between $k$-bundled increasing trees
and so-called $F_{k,k+2}$-increasing trees. The family of
$F_{k,k+2}$-increasing trees consists of modified $(k+2)$-ary
increasing trees: any node except the root of a
$F_{k,k+2}$-increasing tree has $k+2$ labelled positions where
children may be attached, whereas the root has only $k$ positions
(and thus outdegree bounded by $k$). Note that for $k=1$, the root
has a single child and that chopping off the root yields a simple
bijection between $F_{1,3}$-increasing trees of order $n+1$ and
ternary increasing trees of order $n$. Thus the statement below
implies for $k=1$ a bijection between ternary increasing trees and
plane recursive trees, $\cA_n(3)\cong\cB_{n+1}(1)=\cT_{n+1}$, which
is just the bijection discussed in \refR{RSEQ2}.

\begin{theorem}
\label{STIRPprop5T}
The family $\mathcal{F}_n=\mathcal{F}_n(k)$ of $F_{k,k+2}$-increasing trees of order $n$, is in bijection 
with the family of $k$-bundled increasing trees of order $n$, $\mathcal{F}_n(k)\cong \mathcal{B}_n(k)$, $k\ge 1$.
\end{theorem}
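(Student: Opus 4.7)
The plan is to derive the bijection as an immediate consequence of \refT{STIRPprop4T} by decomposing both families at the root. First I would unpack the structure of an $F_{k,k+2}$-increasing tree $\tau$ of order $n$: by definition the root (labelled $1$) has $k$ labelled positions, and each of these positions is either empty or holds a subtree $T_j$ whose root and all deeper nodes have $k+2$ labelled positions. Thus, viewed on its own, $T_j$ is a $(k+2)$-ary increasing tree in the sense of \refE{Ed} on some label set $L_j\subseteq\set{2,\dots,n}$ (with $T_j=\emptyset$ if position $j$ is empty), where the $L_j$'s form an ordered set partition of $\set{2,\dots,n}$. Consequently $\cF_n(k)$ is in bijection with the set of $k$-tuples $(T_1,\dots,T_k)$ of (possibly empty) $(k+2)$-ary increasing trees whose label sets partition $\set{2,\dots,n}$.

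Next I would perform the analogous decomposition for $\cB_n(k)$: by \refR{Rkbundled} a $k$-bundled increasing tree of order $n$ consists of a root (labelled $1$) together with $k$ ordered bundles, each bundle being a (possibly empty) sequence of $k$-bundled increasing trees, and the union of all labels in the bundles is again $\set{2,\dots,n}$. Hence $\cB_n(k)$ is in bijection with $k$-tuples $(S_1,\dots,S_k)$ where $S_j\in\Seq(\cB)_{|L_j|}(k)$ for an ordered set partition $(L_1,\dots,L_k)$ of $\set{2,\dots,n}$.

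Having brought both sides into parallel form, I would invoke \refT{STIRPprop4T}, which provides for every $m\ge 0$ a natural bijection $\Seq(\cB)_m(k)\cong\cA_m(k+2)$. The recursive construction given in the proof of that theorem only uses the relative order of the labels, never their actual values, so it extends verbatim to a label-preserving bijection on any label set $L$ of size $m$ (and sends the empty sequence to the empty tree, handling the empty-bundle/empty-position case). Applying this bijection in each of the $k$ coordinates (tree $T_j\leftrightarrow$ sequence $S_j$ on the label set $L_j$) and reassembling with the common root $1$ yields the desired bijection $\cF_n(k)\cong\cB_n(k)$. There is essentially no hard step: the only point requiring a line of justification is the label-preserving extension of \refT{STIRPprop4T}, which is immediate from its recursive definition.
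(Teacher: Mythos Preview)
Your proof is correct and follows essentially the same approach as the paper: decompose both families at the root into $k$ components (the $k$ positions of the $F_{k,k+2}$-root versus the $k$ bundles of the $k$-bundled root), and then apply the bijection of \refT{STIRPprop4T} componentwise. The paper's proof is simply a terser version of what you wrote, without the explicit bookkeeping on label sets.
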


\begin{proof}
For a given $k$-bundled increasing tree of order $n$, we simply apply
$k$ times the bijection
between sequences of $k$-bundled increasing trees and $(k+2)$-ary increasing trees to the $k$ bundles attached to the root and 
the $k$ positions of the root of the $F_{k,k+2}$-increasing tree.
\end{proof}

\begin{remark}
To give an overview, we have provided the following bijections in
Theorems~\ref{STIRPprop1T},~\ref{STIRPprop3T},~\ref{STIRPprop4T} and
~\ref{STIRPprop5T}. 
\begin{equation*}
 \mathcal{A}_n(k+1) \cong 
 \begin{cases} 
 \mathcal{Q}_n(k),\\
 \Seq(\mathcal{B})_n(k-1),
 \end{cases}
\mathcal{B}_n(k+1) \cong 
 \begin{cases} 
 \overline{\mathcal{Q}}_n(k),\\
 \mathcal{F}_n(k+1).
 \end{cases}
\end{equation*}
It is also possibly to give bijections $\mathcal{Q}_n(k) \cong
\Seq(\mathcal{B})_n(k-1)$ and  
$\overline{\mathcal{Q}}_n(k)\cong \mathcal{F}_n(k+1)$, by simple modifications of the stated bijections.
\end{remark}

\begin{remark}
The families $\Seq(\mathcal{B})(k)$ of sequences $k$-bundled increasing
trees and $\mathcal{F}(k)$ of $F_{k,k+2}$-increasing trees
are non-standard in the sense that they are not part of the
characterization given by Panholzer and Prodinger~\cite{PanPro2005+}. 
However, the counting problem concerning such tree families can be
treated in a general manner, which will be discussed elsewhere. 
\end{remark}

\section{The distribution of $j$-ascents, $j$-descents and
 $j$-plateaux}
We are interested in the joint asymptotic distribution of $j$-ascents
$X_{n,j}$, $j$-descents $Y_{n,j}$ and $j$-plateaux $Z_{n,j}$ in a
$k$-Stirling permutations of order $n$, or equivalently  
in the joint distribution of $j$-children $D_{n,j}$ and $j$-leaves
$L_{n,j+1}$ 
in $(k+1)$-ary increasing trees of order $n$. Following Janson \cite{Jan2008} we use a
(generalized) P\'olya urn model, see \cite{Jan2004}.

\subsection{An urn model for the exterior leaves}
Since we already know from \eqref{ld} that $n-D_{n,j}=L_{n,j}$, 
we can restrict ourselves to the study of the exterior nodes. 
We will use the following urn model.
\smallskip

\begin{urn}\label{UA}
Consider an urn with balls of $k+1$ colours, and let 
$(L_{n,1},\dots,L_{n,k+1})$ be
the number of balls of each colour at time $n$. At each time step, draw one
ball at random from the urn, discard it, and add one new ball of each colour.
Start with $(L_{1,1},\dots,L_{1,k+1})=(1,1,\dots,1)$.
Note that the vector $(L_{n,1},\dots,L_{n,k+1})$ \emph{exactly} 
coincides (in distribution) with the numbers of the exterior nodes of
types $1,\dots,k+1$ in a random $(k+1)$-ary increasing tree, see \refS{SSinck}.
\end{urn}

\refUA{} is completely symmetric in the $k+1$ colours, and we thus
immediately see the following.

\begin{theorem}\label{STIRPtheExch}
For each $n\ge 1$, the distribution of $(L_{n,1},\dots,L_{n,k+1})$ is
exchangeable, 
i.e., invariant under any permutation of the $k+1$ variables.
\end{theorem}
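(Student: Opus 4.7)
The plan is to exploit the complete color-symmetry of \refUA{} and prove the claim by a direct coupling/induction argument, rather than computing any joint distribution explicitly. The point is that both the initial configuration and the replacement rule are invariant under the action of the symmetric group $S_{k+1}$ on the $k+1$ colours, so the distribution at every time $n$ must inherit this invariance.

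Concretely, I would proceed as follows. Fix an arbitrary permutation $\pi\in S_{k+1}$. For the initial state, $(L_{1,1},\dots,L_{1,k+1})=(1,1,\dots,1)$, we have $(L_{1,\pi(1)},\dots,L_{1,\pi(k+1)})=(L_{1,1},\dots,L_{1,k+1})$ trivially, so the base case holds. For the induction step, assume
\begin{equation*}
(L_{n,\pi(1)},\dots,L_{n,\pi(k+1)}) \eqd (L_{n,1},\dots,L_{n,k+1}).
\end{equation*}
The transition from time $n$ to $n+1$ can be described as follows: conditionally on $(L_{n,1},\dots,L_{n,k+1})$, a colour $C_n\in\{1,\dots,k+1\}$ is chosen with $\P(C_n=i\mid L_{n,1},\dots,L_{n,k+1})=L_{n,i}/\sum_j L_{n,j}$, and then $L_{n+1,i}=L_{n,i}+1-\ett{C_n=i}$ for each $i$. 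This rule is manifestly equivariant under the $S_{k+1}$-action on colours: permuting the entries of the state vector permutes the conditional law of $C_n$ in the same way, and the update formula commutes with permutations of indices. Combining equivariance of the transition with the inductive hypothesis gives
\begin{equation*}
(L_{n+1,\pi(1)},\dots,L_{n+1,\pi(k+1)}) \eqd (L_{n+1,1},\dots,L_{n+1,k+1}),
\end{equation*}
completing the induction.

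Since $\pi$ was arbitrary, $(L_{n,1},\dots,L_{n,k+1})$ is exchangeable for every $n\ge 1$, which is the statement of the theorem. There is no genuine obstacle here: the entire content is the symmetry observation already highlighted in the sentence preceding the theorem, and the short induction above is essentially just a formalization of ``symmetric dynamics applied to a symmetric initial law yield a symmetric law at all later times.''
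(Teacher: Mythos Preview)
Your proposal is correct and takes essentially the same approach as the paper: the paper simply states that \refUA{} is completely symmetric in the $k+1$ colours and declares the theorem to follow immediately, whereas you have written out the routine induction that formalizes this symmetry observation. There is nothing more to add.
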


It is customary and convenient to formulate generalized P\'olya urns using
drawings with replacement. In the case of \refUA, we thus restate the
description 
above and say instead that we draw a ball and replace it together with
one ball each of the $k$ other colours. In other words, \refUA{} is described
by the $(k+1)\times(k+1)$ replacement matrix
\begin{equation*}
    A = (1-\delta_{i,j})_{1\le i,j\le k+1}=\left(
    \begin{matrix}
        0 & 1 & 1 & \cdots & 1 & 1 & 1 \\[-1ex]
        1 & 0 & 1 & \ddots & \ddots & \ddots & 1 \\[-1ex]
        1 & 1 & 0 & \ddots & \ddots & \ddots & 1 \\[-1ex]
        \vdots & \ddots & \ddots & \ddots & \ddots &
        \ddots & \vdots \\[-1ex]
        1 & \ddots & \ddots & \ddots & 0 & 1 & 1 \\[-1ex]
        1 & \ddots & \ddots & \ddots & 1 & 0 & 1 \\
        1 & 1 & 1 & \cdots & 1 & 1 & 0
    \end{matrix}
    \right),
\end{equation*}
where $\delta_{i,j}$ denotes the Kronecker delta.

\subsection{Means}
By \refT{STIRPtheExch}, the variables $L_{n,j}$, $j=1,\dots,k+1$, have
the same mean, and since their sum is $kn+1$ by \eqref{dla}, we see
that they each have mean $\frac{kn+1}{k+1}$.
By \eqref{ld} and \refT{STIRPthe1}, we obtain the following exact
formulas for the means.
\begin{theorem}
  \label{Tmean}
The following hold, for $n\ge1$ and $k\ge1$:
  \begin{align*}
\E L_{n,j}&=\frac{kn+1}{k+1}, && 1\le j\le k+1,
\\
\E D_{n,j}&=\frac{n-1}{k+1}, && 1\le j\le k+1,
\\
\E X_{n,j}&=
\E Y_{n,j}=
\frac{n-1}{k+1}, && 1\le j\le k,
\\
\E Z_{n,j}&=\frac{kn+1}{k+1}, && 1\le j\le k-1,
\\
\E X_{n}&=
\E Y_{n}=
\frac{kn+1}{k+1},
\\
\E Z_{n}&=(k-1)\frac{kn+1}{k+1}.  && 
  \end{align*}
\end{theorem}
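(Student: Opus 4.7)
The plan is to deduce every formula from the exchangeability of $(L_{n,1},\dots,L_{n,k+1})$ established in \refT{STIRPtheExch}, combined with the linear identities of \refT{STIRPthe1}. The whole argument is essentially a few lines of arithmetic; there is no real obstacle.

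First, I would observe that by \refT{STIRPtheExch}, all $k+1$ variables $L_{n,1},\dots,L_{n,k+1}$ share a common mean. By the second identity in \eqref{dla}, their sum is deterministically $kn+1$, so taking expectations and dividing by $k+1$ yields $\E L_{n,j}=\frac{kn+1}{k+1}$ for every $j\in\{1,\dots,k+1\}$. This gives the first line.

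Next I would apply \eqref{ld}, namely $D_{n,j}=n-L_{n,j}$, to obtain
\begin{equation*}
\E D_{n,j}=n-\frac{kn+1}{k+1}=\frac{(k+1)n-(kn+1)}{k+1}=\frac{n-1}{k+1},
\qquad 1\le j\le k+1,
\end{equation*}
which is the second line.

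The remaining identities now follow by plugging into the transfer formulas in \refT{STIRPthe1}. The relations $X_{n,j}=D_{n,j+1}$ and $Y_{n,j}=D_{n,j}$ (for $1\le j\le k$) give $\E X_{n,j}=\E Y_{n,j}=\frac{n-1}{k+1}$; the relation $Z_{n,j}=L_{n,j+1}$ (for $1\le j\le k-1$) gives $\E Z_{n,j}=\frac{kn+1}{k+1}$. Finally, $X_n=L_{n,1}$ and $Y_n=L_{n,k+1}$ yield $\E X_n=\E Y_n=\frac{kn+1}{k+1}$, while $Z_n=\sum_{j=2}^{k}L_{n,j}$ is a sum of $k-1$ variables each with mean $\frac{kn+1}{k+1}$, giving $\E Z_n=(k-1)\frac{kn+1}{k+1}$. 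Since every step is either an application of exchangeability, linearity of expectation, or a deterministic identity from \refT{STIRPthe1}, there is no technical difficulty to address.
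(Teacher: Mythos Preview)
Your proof is correct and follows exactly the paper's approach: use exchangeability (\refT{STIRPtheExch}) together with the deterministic sum \eqref{dla} to get $\E L_{n,j}$, then deduce everything else from \eqref{ld} and \refT{STIRPthe1}. There is nothing to add.
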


\subsection{Asymptotic distribution of $j$-ascents, $j$-descents and
  $j$-plateaux} 

We use the urn model \refUA{} to obtain asymptotic normality. We begin
with a general result.

\begin{theorem}
  \label{Turn}
Consider an urn with balls of $q\ge2$ colours, where at each step one
ball is drawn
at random and discarded, and one ball of each colour is added.
If $N_{n,j}$ is the number of balls of colour $j$ after $n$ steps,
then, for any initial values $N_{0,1},\dots,N_{0,q}$,
  \begin{equation*}
	\frac{N_{n,j}-\frac{q-1}q n}{\sqrt n} \dto \zeta_j,
  \end{equation*}
jointly for $j=1,\dots,q$, where $\zeta_j$ are jointly normal random
variables with means $0$ and (co)variances
\begin{align*}
\Cov(\zeta_i,\zeta_j)=\frac{q-1}{q^2(q+1)}(q\gd_{i,j}-1), \qquad 1\le i,j\le q.
\end{align*}
\end{theorem}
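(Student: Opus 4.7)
\emph{Plan.} The urn described above is a balanced generalised P\'olya urn in the sense of Janson~\cite{Jan2004}, with $q \times q$ replacement matrix $A = J - I$, where $J$ is the all-ones matrix. Its spectrum is read off immediately: the principal eigenvalue is $\lambda_1 = q-1$, simple and with right eigenvector $(1,\dots,1)^T$, while all other eigenvalues equal $-1$. Since $\Re\lambda_2/\lambda_1 = -1/(q-1) < 1/2$, we are in the Gaussian regime of \cite{Jan2004}, which yields, regardless of the initial composition,
\[
\frac{N_{n,j} - \frac{q-1}{q}n}{\sqrt n} \dto \zeta_j, \qquad 1 \le j \le q,
\]
jointly, with $(\zeta_1, \dots, \zeta_q)$ a centred multivariate normal vector; the initial values contribute only $O(1)$ terms, which are negligible on the $\sqrt n$ scale.

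It then remains to compute the covariance matrix. Since the urn dynamics are invariant under permutations of the $q$ colours, $(\zeta_1,\dots,\zeta_q)$ is exchangeable, and hence $\Cov(\zeta_i,\zeta_j) = a\delta_{i,j} + b(1-\delta_{i,j})$ for some $a,b$. Moreover $\sum_j N_{n,j}$ grows deterministically by $q-1$ per step, which forces $\sum_j \zeta_j = 0$ and hence $a + (q-1)b = 0$. A single scalar $a = \Var(\zeta_1)$ therefore determines the whole matrix, and matching with the stated formula reduces to showing that $\Var(\zeta_1) = (q-1)^2/(q^2(q+1))$.

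I would compute this scalar by studying the difference $D_n := N_{n,1} - N_{n,2}$, whose dynamics are much simpler than those of an individual $N_{n,j}$. Setting $T_n := \sum_j N_{n,j} = T_0 + (q-1)n$ (deterministic) and letting $I_{n,j}$ denote the indicator that colour $j$ is drawn at step $n$, a short conditional computation using $N_{n,j} = N_{n-1,j} + 1 - I_{n,j}$ yields the affine recursion
\[
\E\bigsqpar{D_n^2 \mid \mathcal F_{n-1}} = D_{n-1}^2\Bigpar{1 - \tfrac{2}{T_{n-1}}} + \tfrac{N_{n-1,1} + N_{n-1,2}}{T_{n-1}},
\]
in which the driving term tends to $2/q$. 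The associated Euler product satisfies $\prod_{k=1}^{n}(1 - 2/T_{k-1}) = \Theta(n^{-2/(q-1)})$, and combined with the standard sum asymptotics this gives $\Var(D_n) \sim \frac{2(q-1)}{q(q+1)}\,n$. Translated back to the limit, $\Var(\zeta_1 - \zeta_2) = 2(a-b) = 2aq/(q-1)$ equals $2(q-1)/(q(q+1))$, which pins down $a = (q-1)^2/(q^2(q+1))$ and hence the full covariance formula.

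The only real obstacle is the variance computation for $D_n$: the recursion itself is elementary, but one must track carefully that the two prefactors from the Euler-product estimate and the ensuing summation cancel to leave a linear-in-$n$ answer with the correct constant. An alternative would be to apply the explicit integral formula for the limiting covariance in \cite{Jan2004} directly to $A = J - I$, but the symmetry-plus-one-computation route above keeps the algebra minimal.
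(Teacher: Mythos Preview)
Your proposal is correct. Both you and the paper invoke the Gaussian regime of Janson~\cite{Jan2004} for the replacement matrix $A=J-I$, with the same spectral observation ($\lambda_1=q-1$ simple, all other eigenvalues $-1$), to obtain joint asymptotic normality with the stated centering.

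The approaches diverge in the covariance computation. The paper takes precisely the ``alternative'' you mention at the end: it applies \cite[Lemma~5.4]{Jan2004} and evaluates the integral formula
\[
\Sigma_I=\int_0^\infty P_I e^{sA}Be^{sA'}P_I'e^{-\lambda_1 s}\,ds
\]
directly, using that on the $(-1)$-eigenspace one has $A=-I$ and $B=\tfrac1qI$, so the integrand collapses to a scalar exponential and $\Sigma_I=\frac{1}{q(q+1)}P_I=\frac{(q-1)I-A}{q^2(q+1)}$. Your route instead exploits exchangeability plus the deterministic constraint $\sum_j\zeta_j=0$ to reduce the covariance matrix to a single unknown, and then pins that unknown down via the elementary second-moment recursion for $D_n=N_{n,1}-N_{n,2}$. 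Your argument is more self-contained and avoids the matrix-integral machinery of~\cite{Jan2004}; the paper's computation is shorter once one is willing to quote those formulas, and it extends mechanically to non-symmetric replacement vectors of the type treated in \refR{Rurn}. One small point to make explicit in your write-up: passing from $\Var(D_n)/n\to 2(q-1)/(q(q+1))$ to $\Var(\zeta_1-\zeta_2)=2(q-1)/(q(q+1))$ needs moment convergence, which is also provided by~\cite{Jan2004} (or by a routine fourth-moment bound from the same recursion).
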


Note that $\sum_1^q\zeta_j=0$, for example because $\sum_j N_{n,j}$ is
deterministic. 

\begin{proof}
  This urn has replacement matrix 
$A=(1-\gd_{i,j})_{i,j=1}^q=(1)_{i,j=1}^q-I$.
Since the rank 1 matrix $(1)_{i,j=1}^{q}$ has one eigenvalue $q$ and $q-1$
eigenvalues 0, $A$ has largest eigenvalue $\gl_1=q-1$ and $q-1$ eigenvalues
$-1$. 
Theorem 3.22 in \cite{Jan2004} applies, with
$v_1=(\frac1q,\dots,\frac1q)$, and shows joint convergence (in distribution)
to normal
variables $\zeta_j$ with mean 0 and a certain covariance matrix $\Sigma$.
The formula for $\Sigma$ in \cite[Theorem 3.22]{Jan2004} is
complicated, so we use 
\cite[Lemma 5.4]{Jan2004}, 
with $a=(a_i)_1^q=(1)_1^q$ and
$m=\gl_1=q-1$, which yields $\Sigma=m\Sigma_I$, with $\Sigma_I$
defined in \cite[(2.15)]{Jan2004}.
Here $\xi_i=(\xi_{i,j})_{j=1}^q=(1-\gd_{i,j})_{j=1}^q$,
$B_i=(\xi_{i,j}\xi_{i,l})_{j,l=1}^q$, $v_{1i}=1/q$ and, using the symmetry,
$B=(b_{ij})_{i,j=1}^q$ with $b_{i,i}=\frac{q-1}q$ and
$b_{i,j}=\frac{q-2}q$, $i\neq j$; hence $B=\frac{q-2}q A+\frac{q-1}q I$.
Further, $P_I$ is the projection onto the eigenspace of $A$ for the
eigenvalue $-1$, and thus $P_I=\frac{(q-1)I-A}q$. 
Further, on this eigenspace $A=-I$ and thus $B=\frac1q I$, and 
\cite[(2.15)]{Jan2004} yields, noting that all involved matrices are
symmetric and commute,
\begin{equation*}
  \begin{split}
  \Sigma_I
&=\int_0^\infty P_I e^{sA}Be^{sA'}P_I'e^{-\gl_1s}\,ds	
=\frac1q P_I\int_0^\infty e^{-s-s-(q-1)s}\,ds
=\frac1{q(q+1)}P_I
\\&
=\frac{(q-1)I-A}{q^2(q+1)}
=\lrpar{\frac{q\gd_{i,j}-1}{q^2(q+1)}}_{i,j=1}^q.	
  \end{split}
\end{equation*}
Recalling that $\Sigma=m\Sigma_I=(q-1)\Sigma_I$, we obtain the result.
\end{proof}

\begin{remark}\label{Rurn}
  Similar calculations show, more generally, that if we at each step
  add a fixed number $s_i$ balls of colour $i$, $i=1,\dots,q$,
  independently of the colour of the drawn and discarded ball, then
$n^{-1/2}\bigpar{N_{n,i}-\frac{\sum_l s_l-1}{\sum_l s_l}s_i n}\dto\zeta_i$,
  jointly, where $\zeta_i$  are jointly normal variables with means
  $0$
and
\begin{equation*}
  \Cov(\zeta_i,\zeta_j)
=
\frac{\sum_l s_l-1}{\sum_l s_l+1}
\lrpar{\frac{s_i}{\sum_l s_l}\gd_{i,j}
-\frac{s_is_j}{(\sum_l s_l)^2}
}.
\end{equation*}
As an example, the numbers $X_n$, $Y_n$ and $Z_n$ of ascents, descents
and plateaux in a random \kSp{} can
be seen as such an urn with replacement vector $(1,1,k-1)$,
which yields an alternative proof of the limit distribution found
above for them. 
\end{remark}

We apply \refT{Turn}, with $q=k+1$, to \refUA{} and obtain using \eqref{ld}
and \refT{STIRPthe1} the following.
\begin{theorem}
  \label{Tnormal}
Let $k\ge1$ and
let $\zeta_j$, $j=1,\dots,k+1$, be jointly normal random
variables with means $0$ and (co)variances
\begin{align*}
\Cov(\zeta_i,\zeta_j)=\frac{k}{(k+1)^2(k+2)}((k+1)\gd_{i,j}-1), \qquad
1\le i,j\le k+1,
\end{align*}
in particular $\Var(\zeta_j)=\frac{k^2}{(k+1)^2(k+2)}$. Note that this
implies $\sum_{j=1}^{k+1}\zeta_j=0$.
Then, the following holds, jointly for all variables,
\begin{align*}
\frac{L_{n,j}-\frac{k}{k+1} n}{\sqrt n} &\dto \zeta_j,  
&& 1\le j\le k+1,
\\
\frac{D_{n,j}-\frac{1}{k+1} n}{\sqrt n} &\dto -\zeta_j,  
&& 1\le j\le k+1,
\\
\frac{X_{n,j}-\frac{1}{k+1} n}{\sqrt n} &\dto \xi_j:=-\zeta_{j+1},  
&& 1\le j\le k,
\\
\frac{Y_{n,j}-\frac{1}{k+1} n}{\sqrt n} &\dto \eta_j:=-\zeta_{j},  
&& 1\le j\le k,
\\
\frac{Z_{n,j}-\frac{k}{k+1} n}{\sqrt n} &\dto \zeta_{j+1},  
&& 1\le j\le k-1,
\\
\frac{X_{n}-\frac{k}{k+1} n}{\sqrt n} &\dto \xi:=\zeta_{1},  
\\
\frac{Y_{n}-\frac{k}{k+1} n}{\sqrt n} &\dto \eta:=\zeta_{k+1},  
\\
\frac{Z_{n}-\frac{k(k-1)}{k+1} n}{\sqrt n} &\dto 
\zeta:=\sum_{j=2}^{k}\zeta_{j}=-\xi-\eta.  
\end{align*}
\end{theorem}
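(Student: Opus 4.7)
The plan is to deduce this theorem as a direct corollary of \refT{Turn} applied to \refUA{}, combined with the deterministic identifications collected in \refT{STIRPthe1} and \eqref{ld}. Since \refT{Turn} has just been proved in full generality, almost all of the work is already done; the remaining task is bookkeeping.

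First, I would apply \refT{Turn} with $q=k+1$ to \refUA{}. By the construction of that urn, the vector $(L_{n,1},\dots,L_{n,k+1})$ coincides in distribution with the numbers of exterior nodes of types $1,\dots,k+1$ in a random $(k+1)$-ary increasing tree. \refT{Turn} then yields joint convergence
\[
\frac{L_{n,j}-\tfrac{k}{k+1}n}{\sqrt n}\dto \zeta_j,\qquad 1\le j\le k+1,
\]
where $(\zeta_j)$ is a centred Gaussian vector with the stated covariance obtained by substituting $q=k+1$ into the formula in \refT{Turn}. The identity $\sum_j \zeta_j=0$ is automatic from the deterministic relation $\sum_j L_{n,j}=kn+1$ (see \eqref{dla}).

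Next, I would transfer this to the remaining variables using the linear identities already established. From \eqref{ld}, $D_{n,j}=n-L_{n,j}$, so
\[
\frac{D_{n,j}-\tfrac{1}{k+1}n}{\sqrt n}=-\frac{L_{n,j}-\tfrac{k}{k+1}n}{\sqrt n}\dto -\zeta_j,
\]
jointly with the $L_{n,j}$. Plugging these into \refT{STIRPthe1}, which gives $X_{n,j}=D_{n,j+1}$, $Y_{n,j}=D_{n,j}$, $Z_{n,j}=L_{n,j+1}$, together with $X_n=L_{n,1}$, $Y_n=L_{n,k+1}$ and $Z_n=\sum_{j=2}^{k}L_{n,j}$, immediately produces all the displayed limits; joint convergence is preserved under the continuous linear mapping that assembles these coordinates, and the identity $\zeta=-\xi-\eta$ for the $Z_n$-limit follows from $\sum_{j=1}^{k+1}\zeta_j=0$.

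The one point that warrants a brief remark is that the centring constants in the theorem, e.g.\ $\tfrac{k}{k+1}n$ for $L_{n,j}$ or $\tfrac{1}{k+1}n$ for $D_{n,j}$, differ from the exact means $\tfrac{kn+1}{k+1}$ and $\tfrac{n-1}{k+1}$ given in \refT{Tmean} only by a bounded quantity, which is absorbed into the $\sqrt n$ normalisation and has no effect on the limit. There is no genuine obstacle here; the only reason the theorem is not a one-line consequence of \refT{Turn} is the need to translate between the several parameters via \refT{STIRPthe1} and to verify that the covariance formula for $q=k+1$ matches the one written in the statement.
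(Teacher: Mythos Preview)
Your proposal is correct and matches the paper's own proof exactly: the paper simply says ``We apply \refT{Turn}, with $q=k+1$, to \refUA{} and obtain using \eqref{ld} and \refT{STIRPthe1} the following,'' and you have unpacked precisely those steps. Your additional remark about the bounded discrepancy between the centring constants and the exact means from \refT{Tmean} is a nice clarification but not strictly needed.
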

A simple calculation shows that the covariance matrix of
$(\xi,\eta,\zeta)$ is (cf.\  \refR{Rurn}) 
\begin{align*}
\left(\begin{matrix}
\frac{k^2}{(k+1)^2(k+2)}&-\frac{k}{(k+1)^2(k+2)}&-\frac{k(k-1)}{(k+1)^2(k+2)}
\\
-\frac{k}{(k+1)^2(k+2)} &\frac{k^2}{(k+1)^2(k+2)} &-\frac{k(k-1)}{(k+1)^2(k+2)}
\\
-\frac{k(k-1)}{(k+1)^2(k+2)}&-\frac{k(k-1)}{(k+1)^2(k+2)}
   &\frac{2k(k-1)}{(k+1)^2(k+2)}
\end{matrix}\right).
\end{align*}

For $k=2$, this yields the 
univariate limit theorems by Bona \cite{Bona2007} 
and the multivariate limit theorem by Janson \cite{Jan2008} 
for $X_n,Y_n,Z_n$.

For $k=1$, the result for $X_n$ or $Y_n$ reduces to the
classical result on the asymptotics of the number of ascents or
descents in a random permutation. (In this case $Z_n=0$.)

\section{The distribution of the number of blocks}

The number $S_n$ of blocks 
in a random $k$-Stirling permutation is described by another urn model.

\begin{urn}\label{UB}
This urn has balls of two colours, black and white. 
At each time step, draw a ball at random from the urn, replace it and
add $k$ further balls: 
if the drawn ball was black, add $k$ black balls;
if the drawn ball was white, add 1 white ball and $k-1$ black.
Let $B_n$ and $W_n$ be the numbers of black and white balls in the urn
at time $n$, and start with $W_n=2$, $B_n=k-1$.
\end{urn}

We thus have $B_n+W_n=kn+1$ balls in the urn at time $n$,  and it is
easily seen that the number of white balls can be interpreted as the
number of gaps between the blocks, or first or last, in a random
\kSp{} of order $n$,
i.e.~as the number of gaps where addition of a string $(n+1)^k$
create a new block. This is one more than the number of blocks, and
thus we have the equality in distribution 
\begin{equation}\label{sw}
  S_n\eqd W_n-1.
\end{equation}

\refUB{} is thus a $2\times 2$ \gpu{}  
with ball replacement matrix $M=\bigl(\begin{smallmatrix} k & 0 \\ k-1
  & 1\end{smallmatrix}\bigr)$.
This urn model is a special case of the triangular $2\times 2$ urn
models analysed in detail by Janson~\cite{Jan2005b}, where the
asymptotic distribution is given. 
The special case of balanced triangular $2\times 2$ urn models was also
studied by Flajolet et al.~\cite{FlaDuPu2006}. 
(An urn is called balanced if the total number of added balls is
constant, independently  of the observed color.) 
For the special case treated here we can add the exact distribution
using the tree representation, 
the moments of $S_n$, and almost sure convergence.

\begin{theorem}
The probability mass function of the random variable $S_n$ counting
the number of blocks  
in a random $k$-Stirling permutation of order $n$ is given by 
\begin{equation*}
\P\{S_n=m\}=\sum_{\ell=0}^{m}\binom{m}{l}(-1)^{\ell}\frac{\binom{n-\frac{\ell}{k}-1}{n}}{\binom{n+\frac{1}{k}-1}{n}}, 
\qquad 1\le m\le n. 
\end{equation*}
The binomial
moments $\E\binom{S_n+r}{r}$ are given by the explicit formula  
\begin{equation*}
\E\binom{S_n+r}{r}
=
\frac{\binom{n-1+\frac{r+1}k}{n}}{\binom{n-1+\frac{1}k}{n}}
=
(r+1)\frac{\binom{n-1+\frac{r+1}k}{n-1}}{\binom{n-1+\frac{1}k}{n-1}},
\quad r= 1,2,\dots
\end{equation*}
The random variable
$\mathcal{S}_n:=
\frac{\binom{n-1+\frac{1}k}{n-1}}{\binom{n-1+\frac{2}k}{n-1}}(S_n+1)$ 
is a positive martingale and converges almost surely to a limit
$\zzeta$, 
i.e.~$\mathcal{S}_n\alaw \zzeta$,
further
\begin{equation*}
n^{-1/k}{S}_n\alaw \zeta
=\frac{\Gamma(1+\frac1k)}{\Gamma(1+\frac{2}k)}\zzeta.
\end{equation*}
The limits $\zzeta$ and $\zeta$ can be specified by the moments
\begin{equation*}
\E(\zeta^r)=(r+1)!\frac{\Gamma(1+\frac1k)}{\Gamma(1+\frac{r+1}k)},\quad
r\ge0. 
\end{equation*}
Further, $\zeta$ has a density function $f(x)$ that can be written as
$f(x)=\Gamma(\frac1k)x^{-k}g(x^{-k})$, $x>0$, 
where $g$ is the density function of a
positive  $\frac1k$-stable distribution with Laplace transform
$e^{-\lambda^{1/k}}$; it is thus given by the series expansion
\begin{equation*}
  f(x)=\frac{\Gamma(\frac1k)}{\pi}\sum_{j=1}^\infty
 (-1)^{j-1} \frac{\Gamma(\frac jk+1)\sin\frac{j\pi}k}{j!}\,x^{j},
\qquad x>0.
\end{equation*}
\end{theorem}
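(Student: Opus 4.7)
I will work throughout with the Pólya urn of \refUB{} (equivalently, the left-right node interpretation in the associated $(k+1)$-ary increasing tree of \refT{STIRPthe1}), setting $W_n:=S_n+1$ for the number of white balls. The one-step dynamics are $W_{n+1}=W_n+\mathbf{1}[\text{draw white}]$ with conditional probability $W_n/(kn+1)$ of drawing white, so
\[
  \E\Bigl[\tbinom{W_{n+1}+r-1}{r}\Bigm|W_n\Bigr]
  =\tbinom{W_n+r-1}{r}+\frac{W_n}{kn+1}\tbinom{W_n+r-1}{r-1}.
\]
The purely algebraic identity $w\binom{w+r-1}{r-1}=r\binom{w+r-1}{r}$ collapses this to the multiplicative recursion $\E\binom{W_{n+1}+r-1}{r}=\tfrac{kn+1+r}{kn+1}\E\binom{W_n+r-1}{r}$; iterating from $W_1=2$ gives $(r+1)\prod_{i=1}^{n-1}(ki+1+r)/(ki+1)$, which is exactly the stated binomial moment formula after rewriting the product as a ratio of Gamma functions.

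Since $S_n\le n$, the distribution of $S_n$ is supported on a finite set, hence is determined by finitely many binomial moments. I would derive the pmf by finite binomial inversion of the generating identity $\sum_{r\ge0}z^r\E\binom{S_n+r}{r}=\E[(1-z)^{-(S_n+1)}]$: substituting the explicit formula on the left and extracting the coefficient of each $(1-z)^{-(m+1)}$ yields the stated alternating sum. (As a cross-check, one can verify by induction on $n$ that the claimed pmf satisfies the forward recursion $p_{n+1,m}=\tfrac{m+1}{kn+1}p_{n,m-1}+\tfrac{kn-m-1}{kn+1}p_{n,m}$ inherited from the urn, with $p_{1,1}=1$.)

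For the martingale, a direct computation yields $a_{n+1}/a_n=(kn+2)/(kn+1)$ for $a_n:=\binom{n-1+2/k}{n-1}\big/\binom{n-1+1/k}{n-1}$, matching $\E[W_{n+1}\mid W_n]/W_n$; hence $\mathcal{S}_n=W_n/a_n$ is a non-negative martingale. The $r=2$ case of the binomial moment formula furnishes a uniform $L^2$-bound on $\mathcal{S}_n$, so martingale convergence gives $\mathcal{S}_n\alaw\zzeta$; in fact the analogous bound for every $r$ gives $L^p$-convergence for every $p\ge1$. Stirling delivers $a_n\sim\Gamma(1+\tfrac1k)/\Gamma(1+\tfrac2k)\cdot n^{1/k}$, which transfers the a.s.\ convergence to $n^{-1/k}S_n\alaw\zeta=\Gamma(1+\tfrac1k)/\Gamma(1+\tfrac2k)\,\zzeta$. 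Passing to the limit in the explicit binomial moment formula (with $r!\binom{S_n+r}{r}\sim S_n^r$) yields $\E\zeta^r=(r+1)!\,\Gamma(1+1/k)/\Gamma(1+(r+1)/k)$.

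It remains to identify the limit law with the stable-density transform. I would compute the Mellin-type moment $\E Y^{-s}=\Gamma(s)^{-1}\int_0^\infty\lambda^{s-1}e^{-\lambda^{1/k}}d\lambda=k\Gamma(ks)/\Gamma(s)$ for the positive $\tfrac1k$-stable $Y$ with Laplace transform $e^{-\lambda^{1/k}}$, and observe by setting $s=(r+1)/k$ that $\E\zeta^r=\tfrac{\Gamma(1/k)}{k}\E Y^{-(r+1)/k}$; a change of variables $x=y^{-1/k}$ then converts this moment equality into the density formula $f(x)=\Gamma(1/k)x^{-k}g(x^{-k})$. To make the identification rigorous I would verify Carleman's condition $\sum_r(\E\zeta^{2r})^{-1/(2r)}=\infty$ via Stirling (the summands decay only like $r^{-1+1/k}$, so divergence is immediate). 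Finally, the series expansion for $f$ follows by substituting the classical Humbert--Pollard series $g(y)=\pi^{-1}\sum_{j\ge1}(-1)^{j-1}\Gamma(j/k+1)\sin(j\pi/k)y^{-j/k-1}/j!$ into $f(x)=\Gamma(1/k)x^{-k}g(x^{-k})$ and simplifying. I expect the main obstacle to be this last identification step -- matching the moment sequence with the Mellin transform of the stable law and verifying Carleman -- since the rest of the proof is essentially a systematic exploitation of the one-step urn recursion.
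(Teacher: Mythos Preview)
Your proposal is correct and covers all parts of the theorem, but it diverges from the paper's proof in two places.

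For the probability mass function, the paper does not invert the binomial moments. Instead it uses the tree decomposition \eqref{eqnz0} to set up the differential equation $\frac{\partial}{\partial z}T(z,v)=v(1+T(z,v))^2(1+T(z))^{k-1}$ for the bivariate generating function, solves it in closed form as $T(z,v)=\bigl(1-v(1-(1-kz)^{1/k})\bigr)^{-1}-1$, and reads off the pmf by expanding $(1-(1-kz)^{1/k})^m$ with the binomial theorem and extracting $[z^n]$. This is more direct and produces the alternating sum immediately; your inversion route is valid but, as you half-acknowledge, the step ``extract the coefficient of $(1-z)^{-(m+1)}$'' would need to be carried out explicitly (your induction cross-check is the safer fallback). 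For the binomial moments and the martingale, your argument is essentially the paper's second approach (the paper also mentions extracting them from $T(z,v)$ as an alternative).

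For the density of $\zeta$, the paper simply invokes \cite[Theorems~1.7 and~1.8]{Jan2005b} on triangular urns, which deliver both the non-integer moment formula and the stable-density representation (including the series) as a black box. Your route---computing the Mellin moments $\E Y^{-s}=k\Gamma(ks)/\Gamma(s)$ of the $\tfrac1k$-stable law, matching them against $\E\zeta^r$ at $s=(r+1)/k$, transferring via $x=y^{-1/k}$ to the size-biased density $f(x)=\Gamma(\tfrac1k)x^{-k}g(x^{-k})$, and invoking Carleman plus the Humbert--Pollard series---is a genuine and self-contained alternative that avoids the citation. The paper does verify moment determinacy, but via finiteness of the moment generating function rather than Carleman; both work here. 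What the paper's approach buys is brevity; what yours buys is independence from the triangular-urn machinery.
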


\begin{remark}
The simple structure of the binomial moments and the almost sure
convergence is actually true 
for all balanced triangular urns of the form $M=\bigl(\begin{smallmatrix} \alpha & 0 \\ \beta-\alpha & \beta\end{smallmatrix}\bigr)$,
$0<\alpha<\beta$, which is easily seen to be true by extending the martingale arguments to thie general case.
\end{remark}

\begin{proof}
We use three different approaches to study the block structure $S_n$
in $k$-Stirling permutations or equivalently  
the number of left-right edges $LR_n$ in $(k+1)$-ary increasing trees,
see \eqref{slr}.
In order to obtain the explicit results for the probability
distribution of $S_n$, we analyze $LR_n$.  
We can use the tree decomposition
\eqref{eqnz0} in order to obtain the differential equation
\begin{equation*}
\frac{\partial}{\partial z}T(z,v)=v(1+T(z,v))^2(1+T(z))^{k-1},\quad T(0,v)=0,
\end{equation*}
where $T(z,v)=\sum_{n\ge 1}\sum_{m\ge 1}\P\{S_n=m\}T_n\frac{z^n}{n!}v^m$ denotes 
the bivariate generating function of the numbers $\P\{S_n=m\}T_n$, and
$T(z)=T(z,1)$ is the generating function of total 
weights of $(k+1)$-ary increasing trees.
By \refE{Ed}, $1+T(z)=(1-kz)^{-1/k}$.
Solving the differential equation and adapting to the initial
condition gives the solution 
\begin{equation*}
T(z,v)= \frac{1}{1-v\big(1-(1-kz)^{1/k}\big)}-1.
\end{equation*}
Extraction of coefficients gives then the stated result for the probability mass function.
Moreover, the stated binomial moments may be obtained from the
generating function by extracting coefficients, 
\begin{equation*}
\E\binom{S_n+r}{r}
=\frac{n!}{T_n}[z^nw^r]\frac{1}{1-w}T\big(z,\frac{1}{1-w}\big). 
\end{equation*}

\smallskip

For the almost sure convergence we proceed as follows. 
Let $W_n=S_n+1$ be the number of gaps between blocks, or,
equivalently, the number of white balls in \refUB, see \eqref{sw}.
Let $\mathcal{F}_{n}$ denote the $\sigma$-field generated by the first
$n$ steps.  
Moreover denote by $\Delta_n=W_n-W_{n-1}=S_n-S_{n-1}\in\{0,1\}$ the
increment at step $n$.  
We have 
\begin{equation*}
\E(W_n \mid \mathcal{F}_{n-1})= \E(W_{n-1}+\Delta_n \mid \mathcal{F}_{n-1})
=W_{n-1} +  \E(\Delta_n \mid \mathcal{F}_{n-1}).
\end{equation*}
Since the probability that a new white ball is generated at step $n$
is proportional to the number of existing white balls (at step $n-1$),
we obtain further 
\begin{equation*}
\E(W_n \mid \mathcal{F}_{n-1})
=W_{n-1} +\frac{W_{n-1}}{k(n-1)+1}
=\frac{k(n-1)+2}{k(n-1)+1}W_{n-1},\quad n\ge 2.
\end{equation*}
Hence, 
\begin{equation*}
\E( \mathcal{S}_n \mid \mathcal{F}_{n-1})
=\frac{\binom{n-1+\frac{1}k}{n-1}}{\binom{n-1+\frac{2}k}{n-1}}
\frac{k(n-1)+2}{k(n-1)+1}
W_{n-1}
=\mathcal{S}_{n-1},\quad n\ge 2.
\end{equation*}
Hence, $\mathcal{S}_n$ is a martingale.
Since it is a positive martingale, it converges almost surely to a
limit $\zzeta$. 
By the well-known asymptotic formula $\binom{n+a}n\sim
n^a/\Gamma(a+1)$, for any 
fixed real $a$, 
$\cS_n\sim\frac{\Gamma(1+\frac2k)}{\Gamma(1+\frac{1}k)} n^{-1/k} S_n$ 
(provided $S_n\to\infty$),
and thus $\cS_n\alaw \zzeta$ can also be written
$n^{-1/k}S_n\alaw\zeta$.

More generally, we similarly have for any positive integer $r$
\begin{equation*}
\begin{split}
\E\bigg(\binom{S_n+r}{r}\biggm| \mathcal{F}_{n-1}\bigg)&= \binom{S_{n-1}+r}{r} + \binom{S_{n-1}+r}{r-1}\frac{S_{n-1}+1}{k(n-1)+1}\\
&=\binom{S_{n-1}+r}{r}\frac{n-1+\frac{r+1}{k}}{n-1+\frac1k}.
\end{split}
\end{equation*}
Hence, 
$\binom{S_n+r}{r}
 \frac{\binom{n-1+\frac1k}{n-1}} {\binom{n-1+\frac{r+1}k}{n-1}}$ 
is a martingale, which also leads to the stated result for the moments
 in an alternative way, using  
the recurrence relation for the unconditional expectation.

Letting \ntoo{} in the moment formula yields
\begin{equation*}
\E\binom{S_n+r}{r}
= (r+1) \frac{\binom{n-1+\frac{r+1}k}{n-1}}
	 {\binom{n-1+\frac{1}k}{n-1}}
\sim (r+1) \frac{\Gamma\xpar{1+\frac{1}k}} {\Gamma\xpar{1+\frac{r+1}k}}n^{r/k}
\end{equation*}
which leads to
$\E S_n^r
\sim (r+1)! \frac{\Gamma\xpar{1+\frac{1}k}} {\Gamma\xpar{1+\frac{r+1}k}}n^{r/k}
$. Hence all moments of $n^{-1/k}S_n$ converge, and the limits must be
the moments of $\zeta$. Letting $r\to\infty$, we see that the moments
do not grow too fast so that the moment generating function $\E
e^{t\zeta}$ is finite for all $t$, and thus the distribution is
determined by the moments.

Finally, we use the general results for urn models in Janson \cite{Jan2005b}.
First, \cite[Theorem 1.3(v)]{Jan2005b} 
yields the convergence
$W_n\dto\zeta$ in distribution, 
and \cite[Theorem 1.7]{Jan2005b} yields the moments of $\zeta$ that we
just derived in a different way; note however that 
\cite[Theorem 1.7]{Jan2005b} yields the formula above also for
non-integer $r\ge0$, with the standard interpretation
$(r+1)!=\Gamma(r+2)$.
Furthermore,
\cite[Theorem 1.8]{Jan2005b} 
shows that $\zeta$ has a density and gives the explicit formulas
stated above.
\end{proof}

\section{The sizes of the blocks}

Recall that every block in a \kSp{} begins and ends with the same
label, which we can regard as a label of the block. We order the
blocks in the block decomposition as $\tcK_1,\dots,\tcK_s$ according
to this label (where $s$ is the number of blocks); 
thus $\tcK_1$ is the block extending from the first 1
to the last, $\tcK_2$ is the block formed by the smallest label not in
$\tcK_1$, and so on. We also let $\tK_i:=|\tcK_i|$ denote the size of
the $i$th block in this order, and put $\tcK_i=\emptyset$, $\tK_i=0$
for $i>s$.

Alternatively, we may order the blocks according to decreasing
size. We let $K_1\ge K_2\ge\dots$ be the sizes of the blocks in this
order, again with $K_i=0$ for $i>s$. Thus, $(K_i)_1^\infty$ is the
decreasing rearrangement of $(\tK_i)_1^\infty$.

For a random \kSp{} of order $n$, we use the notations $\tcK_{n,i}$,
$\tK_{n,i}$ and $K_{n,i}$. Note that $\sum_i\tK_{n,i}=\sum_i K_{n,i}=kn$.

To study these sizes we introduce another urn model. Consider first an
urn with balls of two colours, $\tK_{n,1}-1$ white balls representing
the gaps inside the block $\tcK_{n,1}$ and $nk+2-\tK_{n,1}$ black
balls representing the gaps outside. Adding the string $(n+1)^{k}$ at
one of the gaps inside $\tcKn1$ means increasing $\tKn1$ by $k$, and
adding it outside means keeping $\tKn1$ unchanged; hence this is a
P\'olya urn of the original type considered by
Eggenberger and P\'olya \cite{EggPol1923}, \cite{Polya1931},
where we draw a ball at random and replace it together with $k$ balls
of the same colour.
We start with $\tK_{1,1}=k$, and thus $k-1$ white and 2 black balls.

Next, let us study the second block, $\tcKn2$. At the first $n$ where
this is non-empty, we have $k+2$ gaps outside the first block
$\tcKn1$, $k-1$ of them in $\tcKn2$ and 3 outside both blocks.
Let us now ignore the first block and consider an urn with $\tKn2-1$
white balls representing the gaps in $\tcKn2$ and black balls
representing the gaps outside both $\tcKn1$ and $\tcKn2$.
The balls in this urn are drawn at random times (when we do not add to
a gap in $\tcKn1$), but when they are drawn, the urn behaves exactly
as for $\tcKn1$: we replace the drawn ball together with $k$ of the
same colour.

The same argument applies to $\tcKn m$ for any $m\ge2$; if we ignore
the preceding blocks and additions to them, we have the same P\'olya
urn again, but now started with $m+1$ black balls, representing the
gaps outside the first $m$ blocks. We hence make the following
definition.

\begin{urn}
  \label{UC}
This is the standard P\'olya urn with balls of two colours and where
each drawn ball is replaced together with $k$ balls of the same
colour. Let \refUCm{} be the version where we start with $k-1$ white
and $m+1$ black balls, and let $\W_{N,m}$ and $\B_{N,m}$ denote the
  numbers of white and black balls after $N-1$ draws, when the urn
  contains $\W_{N,m}+\B_{N,m}=kN+m$ balls.
\end{urn}

We can thus identify (with the urns \refUCx1, \refUCx2, \dots
independent),
recalling that the balls in urn \refUCx{m+1} correspond to the black
balls in urn \refUCm,
\begin{align*}
  \tKn1&=\W_{n,1}+1,
\\
  \tKn2&=\W_{N_2,2}+1, \quad \text{with}\quad kN_2+2=\B_{n,1},
\\
  \tKn3&=\W_{N_3,3}+1, \quad \text{with}\quad kN_3+3=\B_{N_2,1},
\end{align*}
and so on.

\begin{theorem}\label{TGEM}
  There exists a sequence of independent beta distributed random
  variables
$\gb_m\sim\Beta(\frac{k-1}k,\frac{m+1}k)$ such that
  \begin{equation}\label{tgem}
	\frac1{kn}(\tKn1,\tKn2,\dots)
\alaw
(\gb_1,(1-\gb_1)\gb_2, (1-\gb_1)(1-\gb_2)\gb_3,\dots).
  \end{equation}
\end{theorem}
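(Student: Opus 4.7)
The plan is to apply the classical Eggenberger--P\'olya limit theorem to each urn $\refUCm$ separately and then stitch the limits together using the nested coupling spelled out just before the theorem. For each fixed $m \ge 1$, $\refUCm$ is a standard two-colour P\'olya urn with addition parameter $k$ and initial composition $(k-1, m+1)$, so $\W_{N,m}/(kN+m)$ is a bounded martingale whose a.s.\ limit exists, and a direct moment computation identifies it as $\Beta\bigpar{\tfrac{k-1}{k},\tfrac{m+1}{k}}$. Realising the urns $\refUCx{1}, \refUCx{2}, \dots$ on a common probability space with mutually independent internal randomness, I obtain independent limits $\gb_1, \gb_2, \dots$ with the required marginal distributions.

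Next I couple this construction with a uniformly random \kSp. Urn $\refUCx{1}$ governs $\tcKn{1}$ via $\tKn{1} = \W_{n,1}+1$; a black draw in $\refUCx{1}$ corresponds to adding a string outside block $1$ and advances the internal clock of $\refUCx{2}$ by one step, with the independent internal randomness of $\refUCx{2}$ deciding whether the new string lands in $\tcKn{2}$ or outside both blocks. This is exactly the evolution described before the theorem statement, and iteration shows that the nested urns reproduce the joint distribution of $(\tcKn{m})_{m\ge 1}$.

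The block-size limits are then propagated by induction on $m$. The base case $\tKn{1}/(kn) \alaw \gb_1$ is immediate. For the inductive step, assume $\tKn{i}/(kn) \alaw \gb_i \prod_{j<i}(1-\gb_j)$ a.s.\ for all $i<m$; using the telescoping identity $\sum_{i<m} \gb_i \prod_{j<i}(1-\gb_j) = 1-\prod_{j<m}(1-\gb_j)$, one obtains
\begin{equation*}
\frac{kN_m + m}{kn} \alaw \prod_{j<m}(1-\gb_j).
\end{equation*}
Because each $\gb_j$ has a continuous Beta density on $(0,1)$, this limit is strictly positive a.s., hence $N_m \to \infty$ a.s. Applying the a.s.\ limit of $\refUCx{m}$ at the random index $N_m$, which is measurable with respect to the first $m-1$ urns and therefore independent of the driving randomness of $\refUCx{m}$, gives $\W_{N_m,m}/(kN_m+m) \alaw \gb_m$; multiplying by the displayed limit yields $\tKn{m}/(kn) \alaw \gb_m \prod_{j<m}(1-\gb_j)$, completing the induction.

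The main obstacle is this time-change step: one needs both $N_m \to \infty$ a.s.\ and the independence of the internal randomness of $\refUCx{m}$ from $N_m$. The first follows from the a.s.\ positivity of $\prod_{j<m}(1-\gb_j)$, which in turn relies on the continuity of the Beta marginals. The second is wired into the coupling in the second paragraph, where each urn carries its own independent randomness and $N_m$ is a function of urns $\refUCx{1}, \dots, \refUCx{m-1}$ alone.
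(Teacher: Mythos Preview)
Your proposal is correct and follows essentially the same approach as the paper: apply the classical P\'olya urn limit $\W_{N,m}/(kN)\alaw\gb_m\sim\Beta(\tfrac{k-1}{k},\tfrac{m+1}{k})$ to each \refUCm{} and then chain the limits using the identification $\tKn1=\W_{n,1}+1$, $\tKn2=\W_{N_2,2}+1$ with $kN_2+2=\B_{n,1}$, etc., established just before the theorem. The paper's proof is considerably terser, writing out only the cases $m=1,2$ and then saying ``and so on''; your version makes explicit the induction, the a.s.\ positivity of $\prod_{j<m}(1-\gb_j)$ needed for $N_m\to\infty$, and the independence structure justifying the random time change, all of which the paper leaves implicit.
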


\begin{proof}
  The basic limit theorem for P\'olya urns says that, as $N\to\infty$,
  \begin{equation*}
\frac{\W_{N,m}}{kN}\alaw\gb_m\sim\Beta\Bigpar{\frac{k-1}k,\frac{m+1}k}
  \end{equation*}
and thus 
$
\frac{\B_{N,m}}{kN}\alaw1-\gb_m$.
(This is already in P\'olya \cite{Polya1931} for convergence in
distribution. See, for example, \cite{JohnKotz1977} or 
\cite[Section 11]{Jan2005b}.) 
Consequently,
\begin{align*}
  \frac{\tKn1}{kn}&=\frac{\W_{n,1}+1}{kn}\alaw\gb_1,
\\
  \frac{\tKn2}{kn}&=\frac{\B_{n,1}}{kn}\cdot\frac{\W_{N_2,2}+1}{kN_2+2}
\alaw(1-\gb_1)\gb_2,
\end{align*}
and so on.
\end{proof}

Note that both sides of \eqref{tgem} are elements of $\cP$,
the space of non-negative sequences $(p_i)_1^\infty$ with
$\sum_ip_i=1$; $\cP$ can be seen as the space of probability
distributions on $\bbN$. The convergence in the proof above was
componentwise, i.e.~in the product topology, but it is well-known (and
easy to verify) that on $\cP$, this topology is equivalent to the
$\ell^1$-topology with the metric $d((p_i),(p'_i))=\sum_i|p_i-p'_i|$,
and also to the usual weak topology of probability distributions;
hence the theorem holds for any of these topologies.

Let $\tV_i=\gb_i\prod_{j=1}^{i-1}(1-\gb_j)$ be the elements of the
limit sequence in \eqref{tgem}, and let $(V_i)_1^\infty$ denote the
decreasing rearrangements of them.
The distribution of this random element of $\cP$ is denoted
$\PD(\frac1k,\frac1k)$, see 
Pitman and Yor \cite{PitYor1997} or
Bertoin \cite{Bertoin2006}.

Taking the decreasing rearrangement is a continuous operation on
$\cP$, and thus we immediately obtain from \refT{TGEM} the following.

\begin{theorem}\label{TPD}
  \begin{equation}\label{tpd}
	\frac1{kn}(\Kn1,\Kn2,\dots)
\alaw
(V_1,V_2, \dots) \sim \PD\Bigpar{\frac1k,\frac1k}.
  \end{equation}
\end{theorem}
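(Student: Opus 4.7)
The plan is to deduce \refT{TPD} from \refT{TGEM} by applying a continuous-mapping argument, and then to identify the distribution of the limit as Poisson--Dirichlet by recognizing a standard GEM stick-breaking representation. By construction, $(\Kn1, \Kn2, \dots)$ is the decreasing rearrangement of $(\tKn1, \tKn2, \dots)$, so the first task is to transfer the a.s.\ convergence in \eqref{tgem} through the rearrangement operation, and the second task is to verify that the limit on the right-hand side of \eqref{tpd} is indeed $\PD(\frac1k,\frac1k)$-distributed.

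For the first task, I would use the remark already made after \refT{TGEM}: convergence in $\cP$ in the $\ell^1$-metric $d((p_i),(p'_i))=\sum_i|p_i-p'_i|$ is equivalent to componentwise convergence (since all sequences in $\cP$ sum to $1$). It is then a standard and easily checked fact that the decreasing rearrangement $R\colon \cP\to\cP$ is $1$-Lipschitz in the $\ell^1$-metric (any transposition only decreases the sorted gap). Thus $R$ is continuous on $\cP$, and applying $R$ to both sides of \eqref{tgem} preserves the a.s.\ convergence, yielding
\begin{equation*}
\frac1{kn}(\Kn1,\Kn2,\dots)=R\!\left(\frac1{kn}(\tKn1,\tKn2,\dots)\right)\alaw R(\tV_1,\tV_2,\dots)=(V_1,V_2,\dots).
\end{equation*}

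For the second task, I would invoke the standard stick-breaking (GEM) construction of the two-parameter Poisson--Dirichlet distribution due to Pitman--Yor: if $\gb_i\sim\Beta(1-\ga,\theta+i\ga)$ are independent for $i\ge1$ and $\tV_i=\gb_i\prod_{j<i}(1-\gb_j)$, then the decreasing rearrangement of $(\tV_i)$ has distribution $\PD(\ga,\theta)$. Setting $\ga=\theta=\frac1k$ gives $1-\ga=\frac{k-1}k$ and $\theta+i\ga=\frac{i+1}k$, which matches precisely the parameters $\Beta(\frac{k-1}k,\frac{i+1}k)$ appearing in \refT{TGEM}. Hence $(V_i)_1^\infty\sim\PD(\frac1k,\frac1k)$, as claimed.

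The only substantive obstacle is the continuity of the rearrangement map on $\cP$; once this is in place, both steps are essentially bookkeeping. The cleanest route is probably to note that $V_i = \lim_{M\to\infty}\max_{|S|=i}\min_{j\in S}\tV_j - \max_{|S|=i-1}\min_{j\in S}\tV_j$ type expressions -- or, more directly, to verify the Lipschitz bound by a coupling argument on pairs of sequences. I do not foresee any further difficulty, since the identification of the limit law is an immediate consequence of the Pitman--Yor representation once the parameters are matched.
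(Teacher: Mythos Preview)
Your proposal is correct and follows essentially the same route as the paper: the paper also derives \refT{TPD} directly from \refT{TGEM} by noting that the decreasing rearrangement is a continuous map on $\cP$, and identifies the limit distribution as $\PD(\frac1k,\frac1k)$ by citing Pitman and Yor \cite{PitYor1997} (which is exactly the GEM stick-breaking representation you spell out). Your write-up simply makes explicit the Lipschitz continuity and the parameter-matching that the paper leaves to the reference.
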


\begin{corollary}
  The largest block size has the limit 
  \begin{equation*}
	\frac{\Kn1}{kn}\alaw V_1=\max_{i\ge1}\tV_i.
  \end{equation*}
\end{corollary}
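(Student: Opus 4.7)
The plan is to read off the corollary from \refT{TPD} by noting two facts: taking the first coordinate of a decreasing rearrangement is a continuous operation, and the first coordinate $V_1$ of the limiting sequence $(V_i)_1^\infty$ is, by the very definition of decreasing rearrangement, equal to $\max_{i\ge1}\tV_i$.

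More precisely, I would first observe that since $(\tV_i)_1^\infty\in\cP$ almost surely, we have $\tV_i\to0$ a.s., so the supremum $\sup_i\tV_i$ is actually attained and equals the largest element of the decreasing rearrangement, namely $V_1$. This gives the identification $V_1=\max_{i\ge1}\tV_i$ on the right-hand side. Next, I would note that the map $\Phi\colon\cP\to[0,1]$ sending $(p_i)_1^\infty$ to its first decreasing-rearrangement coordinate, i.e.\ $\Phi((p_i))=\sup_ip_i$, is continuous in the $\ell^1$-topology (in fact already in the product topology), because $|\sup_ip_i-\sup_iq_i|\le\sup_i|p_i-q_i|\le\sum_i|p_i-q_i|$.

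Applying $\Phi$ to both sides of \eqref{tpd} and using the continuous mapping theorem for almost sure convergence, the left-hand side becomes $\frac1{kn}\Kn1$, since by construction $\Kn1$ is the maximum block size and equals the first entry of the decreasing rearrangement of $(\tKn i)_i$. The right-hand side becomes $V_1=\max_{i\ge1}\tV_i$, finishing the proof.

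There is essentially no obstacle: the only small point requiring a sentence is that the supremum on the right is attained, which follows from $\sum_i\tV_i=1$ a.s. Everything else is immediate from \refT{TPD} together with continuity of $\sup$ under the topologies on $\cP$ already discussed in the paragraph following \refT{TGEM}.
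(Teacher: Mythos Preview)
Your argument is correct and is exactly the reasoning the paper intends: the corollary is stated without proof in the paper, as an immediate consequence of \refT{TPD} via continuity of the first coordinate (equivalently, of $\sup$) on $\cP$. You have simply made this explicit, including the small point that the supremum on the right is attained.
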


\begin{remark}
These results can be compared with the classical result that the
lengths of the cycles in a random permutation, arranged in decreasing
order and divided by the size of the permutation, converge (in
distribution) to $\PD(1)=\PD(0,1)$, 
see e.g.~\cite[Sections 5.5 and  5.7]{ArrBarTav2003}.
\end{remark}

\begin{remark}
  For $k=2$ we obtain in \refT{TPD} the limit distribution
  $\PD(\frac12,\frac12)$ which arises in other contexts too: it is the
  distribution of the sequence of excursion lengths in a Brownian 
  bridge \cite{PitYor1992}, \cite{PerPitYor1992}, \cite{AldPit1994},
  \cite{PitYor1997}
(for a related characterization for $k>2$ see \cite{PitYor1997})
and it is the asymptotic distribution of the sizes of the tree components
  in a random mapping, see  \cite{Stepanov1969} and \cite{AldPit1994}.
It is an interesting problem to see whether there are more direct
  relations with these objects.
\end{remark}

\begin{ack}
  Much of this research was done during a visit of SJ to the Erwin
  Schr\"odinger Institute in Vienna, 2008.
\end{ack}

\newcommand\JCTA{\emph{J. Combin. Theory Ser. A} }
\newcommand\book{\emph}
\def\nobibitem#1\par{}

\end{document}